\definecolor{BF-blue}{RGB}{0,128,255}
\definecolor{BF-pink}{RGB}{255,0,128}
\definecolor{BF-red}{RGB}{255,0,64}
\definecolor{BF-green}{RGB}{128,255,0}
\newtheorem{theorem}{Theorem}[section]
\newtheorem{lemma}{Lemma}[section]
\newtheorem{corollary}{Corollary}[section]
\theoremstyle{definition}
\newtheorem{remark}{Remark}[section]
\numberwithin{equation}{section}
\begin{document}

\title[A Minkowski-curvature equation with super-exponential nonlinearity]{Positive solutions for a Minkowski-curvature \\equation with 
indefinite weight and \\super-exponential nonlinearity}

\author[A.~Boscaggin]{Alberto Boscaggin}

\address{
Department of Mathematics ``Giuseppe Peano'', University of Torino\\
Via Carlo Alberto 10, 10123 Torino, Italy}

\email{alberto.boscaggin@unito.it}

\author[G.~Feltrin]{Guglielmo Feltrin}

\address{
Department of Mathematics, Computer Science and Physics, University of Udine\\
Via delle Scienze 206, 33100 Udine, Italy}

\email{guglielmo.feltrin@uniud.it}

\author[F.~Zanolin]{Fabio Zanolin}

\address{
Department of Mathematics, Computer Science and Physics, University of Udine\\
Via delle Scienze 206, 33100 Udine, Italy}

\email{fabio.zanolin@uniud.it}

\thanks{Work written under the auspices of the Grup\-po Na\-zio\-na\-le per l'Anali\-si Ma\-te\-ma\-ti\-ca, la Pro\-ba\-bi\-li\-t\`{a} e le lo\-ro Appli\-ca\-zio\-ni (GNAMPA) of the Isti\-tu\-to Na\-zio\-na\-le di Al\-ta Ma\-te\-ma\-ti\-ca (INdAM). The first two authors are supported by INdAM--GNAMPA project ``Problemi ai limiti per l'equazione della curvatura media prescritta''.
\\
\textbf{Preprint -- July 2020}}

\subjclass{34B15, 34B18, 34C25, 47H11.}

\keywords{Minkowski-curvature operator, indefinite weight, positive solutions, periodic problem, Neumann problem, super-exponential nonlinearity.}

\date{}

\dedicatory{}

\begin{abstract}
We investigate the existence of positive solutions for a class of Minkowski-curvature equations with indefinite weight and nonlinear term having superlinear growth at zero and super-exponential growth at infinity. As an example, for the equation
\begin{equation*}
\Biggl{(} \dfrac{u'}{\sqrt{1-(u')^{2}}}\Biggr{)}' + a(t) \bigl{(}e^{u^{p}}-1\bigr{)}  = 0,
\end{equation*}
where $p > 1$ and $a(t)$ is a sign-changing function satisfying the mean-value condition
$\int_{0}^{T} a(t)\,\mathrm{d}t < 0$, we prove the existence of a positive solution for both periodic and Neumann boundary conditions.
The proof relies on a topological degree technique.
\end{abstract}

\maketitle

\section{Introduction}\label{section-1}

In this paper, we are concerned with the existence of positive $T$-periodic solutions to the differential equation
\begin{equation}\label{eq-intro}
\Biggl{(} \dfrac{u'}{\sqrt{1-(u')^{2}}}\Biggr{)}' + a(t) g(u) = 0,
\end{equation}
where $a \colon \mathbb{R} \to \mathbb{R}$ is a sign-changing $T$-periodic function and $g\colon \mathopen{[}0,+\infty\mathclose{[}\to \mathopen{[}0,+\infty\mathclose{[}$ is a continuous function vanishing only at $u = 0$.

As is well-known, equation \eqref{eq-intro} can be meant as a one-dimensional version of the mean curvature equation in the Lorentz--Minkowski space:
in recent years, the solvability of the associated boundary value problems has raised a lot of interests, both in the ODE and in the PDE setting
(see, for instance, \cite{Az-14,BeJeTo-13,BeMa-07,BoCoNo-pp,BoGa-19ccm,CoObOmRi-13,Da-16,Da-18,Hu-19,Ma-13,Po-18} and the references therein). On the other hand, the specific choice for the nonlinear term made in equation \eqref{eq-intro} casts it into the family of nonlinear problems with indefinite weight, a quite popular topic in Nonlinear Analysis since the pioneering papers \cite{AlTa-93,BaPoTe-88,BeCDNi-95,Bu-76,HeKa-80}.
In the case of linear differential operators, the bibliography about indefinite weight problems is nowadays very wide (we refer to \cite{Fe-18book,LG-16,PaZa-02} for an extensive list of references); more recently, the case of mean curvature equations, both in Euclidean and Minkowski space, 
has been considered as well (see, among others, \cite{BeZa-18,BoFe-PP,BoFe-20na,LGOm-20,LGOmRi-17jde,OmSo-PP}).

On this line of research, our present investigation is mainly motivated by some results obtained in \cite{BoFe-PP}. In particular, in \cite[Theorem~1.1]{BoFe-PP}, it was proved that the parameter-dependent equation
\begin{equation}\label{eq-potenza}
\Biggl{(} \dfrac{u'}{\sqrt{1-(u')^{2}}}\Biggr{)}' + \lambda a(t) u^{p} = 0,
\end{equation}
where $p > 1$ and $a(t)$ satisfies (besides some technical assumptions) the mean-value condition $\int_0^T a(t)\,\mathrm{d}t < 0$,
has at least two positive $T$-periodic solutions if the parameter $\lambda$ is positive and large enough (say, $\lambda > \lambda^*$)
and no positive $T$-periodic solutions if $\lambda$ is positive and sufficiently small (say, $\lambda \in (0,\lambda_*)$, with $\lambda_* \leq \lambda^*$). An analogous result for the Neumann boundary value problem was established in \cite{BoFe-20na}. 
Incidentally, let us recall that the condition $\int_0^T a(t)\,\mathrm{d}t < 0$
is actually necessary for the existence of a positive solution of \eqref{eq-potenza} with periodic or Neumann boundary conditions, 
as it can be easily checked by dividing the equation by $u^{p}$ and integrating by parts.

Actually, more general versions of these results are valid for the equation 
\begin{equation}\label{eq-g}
\Biggl{(} \dfrac{u'}{\sqrt{1-(u')^{2}}}\Biggr{)}' + \lambda a(t) g(u) = 0,
\end{equation}
with the same assumptions on $a(t)$ and suitable hypotheses on $g(u)$ at zero (always requiring a superlinear growth, that is $g(u)/u \to 0$ as $u \to 0^{+}$) and at infinity. In particular, according to \cite[Theorem~3.2]{BoFe-PP}, non-existence of solutions to \eqref{eq-g} for $\lambda$ small can be ensured when $g(u)$ is continuously differentiable and 
\begin{equation}\label{eq-nonex}
\limsup_{u \to +\infty} \frac{\vert g'(u) \vert}{(g(u))^\eta} < +\infty, \quad \text{for some $\eta \in \mathopen{[}0,1\mathclose{[}$}.
\end{equation}
Notice that the above condition implies that the growth of $g(u)$ at infinity is at most of power-type 
(precisely, $g(u) = O(u^\frac{1}{1-\eta})$ for $u \to +\infty$).

This solvability picture is in sharp contrast with the one of the semilinear indefinite equation
\begin{equation}\label{eq-semi}
u'' + \lambda a(t)g(u) = 0.
\end{equation}
Indeed, when $g(u) = u^p$ with $p > 1$ and, more in general, for a large class of functions $g(u)$ having superlinear growth both at zero and at infinity (namely, $g(u)/u \to 0$ as $u \to 0^{+}$ and $g(u)/u \to +\infty$ as $u \to +\infty$), only one positive $T$-periodic solution to \eqref{eq-semi} can be provided, but no assumptions on the parameter $\lambda$ are needed (see, for instance, \cite{FeZa-15ade},
as a counterpart of classical results in the PDE setting \cite{AlTa-93,BeCDNi-95}).

This essentially different behavior is, of course, a consequence of the nonlinear character of the Minkowski-curvature differential operator. 
Roughly speaking, it turns out that, in a Minkowski-curvature equation, the differential operator is predominant in dictating the behavior of large solutions. As a consequence, the role of the behavior of $g(u)$ at infinity becomes more subtle and, in particular, a power-type growth at infinity of $g(u)$ is no more sufficient to ensure solvability of equation \eqref{eq-g} without assumptions on the parameter $\lambda$.
This point of view is indeed confirmed by the fact that a two-solution theorem for $\lambda$ large, as well as non-existence for $\lambda$ small, can be proved for the semilinear equation \eqref{eq-semi} when $g(u)$ is a so-called super-sublinear function, meaning that $g(u)/u \to 0$ for both $u \to 0^{+}$ and $u \to +\infty$ (cf.~\cite{BoFeZa-16}).

The above discussion seems to suggest the rather unexpected conjecture that,
while keeping the same assumptions for $g(u)$ near zero, a stronger growth of $g(u)$ at infinity could instead ensure the solvability, for any $\lambda > 0$, of equation \eqref{eq-g} (written, from now on, simply as \eqref{eq-intro}, since the parameter $\lambda$ does not play a role).
Notice that, in any case, the growth of $g(u)$ at infinity should be so rapid as to violate condition \eqref{eq-nonex}: indeed, as already observed, in such a situation a non-existence result is valid.

The aim of the present paper is to provide a positive answer to this conjecture, by showing that an existence result can be established 
for a class of functions $g(u)$ with the property of growing at infinity more than their primitives, and thus exhibiting a ``super-exponential'' growth. The general statement will be given in Section~\ref{section-2}; by now, we just present it for the model example
\begin{equation}\label{eq-exp}
\Biggl{(} \frac{u'}{\sqrt{1-(u')^{2}}}\Biggr{)}' + a(t) \bigl{(}e^{u^{p}}-1\bigr{)} = 0,
\end{equation}
where $p > 1$ (notice that $e^{u^p}-1 \sim u^p$ for $u \to 0^+$). In such a situation, the following theorem holds true.

\begin{theorem}\label{th-intro}
Let $a \colon \mathbb{R} \to \mathbb{R}$ be a $T$-periodic sign-changing continuous function,
having a finite number of zeros in $\mathopen{[}0,T\mathclose{[}$ and satisfying the mean
value condition 
\begin{equation*}
\int_0^T a(t)\,\mathrm{d}t < 0.
\end{equation*}
Then, there exists a positive $T$-periodic solution of \eqref{eq-exp}.
\end{theorem}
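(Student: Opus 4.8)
The plan is to set up a topological degree argument for the $T$-periodic problem, in the spirit of the coincidence degree theory for $\phi$-Laplacian operators of Manásevich and Mawhin. First I would recast \eqref{eq-exp} as a fixed point equation $u=\mathcal{T}(u)$ in the space $C_T$ of continuous $T$-periodic functions, where $\mathcal{T}$ is the completely continuous operator built from the inverse homeomorphism $\phi^{-1}(y)=y/\sqrt{1+y^{2}}$ together with the mean-value projection encoding the solvability condition $\int_0^T a(t)g(u)\,\mathrm{d}t=0$. To force positivity I would replace $g$ by its truncation $\hat{g}(u)=g(u)$ for $u\ge 0$ and $\hat{g}(u)=0$ for $u<0$; a maximum-principle argument then shows every solution is nonnegative, and since $g(u)\sim u^{p}$ with $p>1$ near $0$ guarantees local uniqueness for the Cauchy problem, a nontrivial nonnegative solution cannot touch zero and is therefore strictly positive.

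Two structural facts come for free from the Minkowski operator and are used throughout: every solution satisfies $|u'|<1$, and consequently its oscillation over one period is controlled, $\max u-\min u<T/2$. Thus bounding $\|u\|_{\infty}$ reduces to bounding a single quantity such as $\max u$. Moreover, integrating \eqref{eq-exp} over $[0,T]$ and using periodicity yields the necessary relation $\int_0^T a(t)g(u(t))\,\mathrm{d}t=0$, consistent with $a$ changing sign.

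The degree computation has two halves. On a small ball $B_r$ I would prove $\deg(I-\mathcal{T},B_r,0)=1$: since $g(u)/u\to 0$ as $u\to 0^{+}$, for $\|u\|$ small the nonlinearity is negligible and a standard homotopy to the reduced (kernel) problem applies, where the mean-value condition $\int_0^T a<0$ supplies the sign needed to compute the degree. On a large ball $B_R$ I would prove $\deg(I-\mathcal{T},B_R,0)=0$ by introducing a forcing homotopy, e.g.\ $(\phi(u'))'+a(t)g(u)+\vartheta=0$ with $\vartheta\in[0,\bar\vartheta]$: integrating gives $\int_0^T a\,g(u)=-\vartheta T$, so a uniform a priori bound $\|u\|_{\infty}\le R$ along the homotopy, combined with the absence of solutions for $\vartheta=\bar\vartheta$ large, forces the degree to vanish. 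Additivity then gives $0=\deg(B_R,\cdot)=\deg(B_r,\cdot)+\deg(B_R\setminus\overline{B_r},\cdot)$, so the annular degree equals $-1$ and produces a nontrivial solution in $B_R\setminus\overline{B_r}$, which by the above is the desired positive solution.

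The main obstacle is precisely the uniform a priori bound on $\max u$ in the large-ball step, and this is exactly where the super-exponential growth of $g$ --- the hypothesis that $g$ grows faster than its primitive --- is indispensable. By the oscillation bound, a solution with $\max u=M$ large is uniformly large, $u>M-T/2$, and since $g(u(t))/g(M)=(e^{u(t)^{p}}-1)/(e^{M^{p}}-1)$ decays super-exponentially as soon as $u(t)$ falls below $M$ by a fixed amount, the weight $g(u)$ concentrates in sharp spikes at the peaks of $u$, which (as $(\phi(u'))'=-a\,g(u)\le 0$ there) must lie in the positivity set of $a$. I expect the argument to proceed by contradiction: assuming $M\to+\infty$ along a sequence of solutions, one rescales near a peak and analyzes the concentrating measures $a(t)g(u)\,\mathrm{d}t$, using $\int_0^T a\,g(u)=0$ and the sign structure of $a$ to reach an incompatibility. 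This concentration analysis, which has no analogue for the power-type nonlinearities ruled out by \eqref{eq-nonex}, is the crux of the proof; the remaining degree-theoretic steps are comparatively routine.
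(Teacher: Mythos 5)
Your overall architecture coincides with the paper's: truncate $g$ and use a maximum principle for positivity, compute a Leray--Schauder degree equal to $\pm 1$ on a small ball via the homotopy $\vartheta\,a(t)g(u)$ and the mean-value condition, compute degree $0$ on a large ball via a forcing homotopy whose integral identity kills solutions for large forcing, and conclude by excision. All of this matches Lemma~\ref{lem-deg} and the verifications of $(H_1)$ and $(H_3)$ in Section~\ref{section-3}.

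The genuine gap is in the only step you yourself flag as the crux: the nonexistence of solutions of the forced problem on the large sphere $\|u\|_\infty=R$ (condition $(H_2)$). Your proposed route --- blow-up along a sequence $M\to+\infty$, concentration of the measure $a(t)g(u(t))\,\mathrm{d}t$ at the peaks, and an incompatibility extracted from $\int_0^T a\,g(u)\,\mathrm{d}t=-\vartheta T$ together with the sign structure of $a$ --- does not close as stated. Since $|u'|<1$ forces $u\geq M-T/2$ everywhere, both the positive and the negative contributions to $\int_0^T a\,g(u)\,\mathrm{d}t$ are of the same order: integrating the spike $g(u(t))\approx g(M)e^{-c|t-\hat t|}$ with $c\sim g(M)/G(M)$ over the positivity interval produces a quantity of order $G(M)$, and the negative part is likewise bounded above \emph{and below} by quantities of order $G$ evaluated at heights differing from $M$ by $O(T)$. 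For a super-exponential $g$ the ratio $G(M-\mathrm{const})/G(M)$ does not tend to $1$, so the integral identity alone yields no contradiction without tracking exactly where on the graph of $u$ these $G$-values are evaluated; this is precisely the bookkeeping the paper performs with the constants $A_i$, $\gamma_i(\delta_i)$, $\beta_i$ and condition \eqref{eq-R2}. The mechanism the paper actually uses is different in kind: it is a pointwise estimate on $\varphi(u')$, not an integral balance. Crossing a window of width $\delta_i$ past the maximum inside a positivity interval drives $\varphi(u')$ down to about $-\gamma_i(\delta_i)\,g(R-\delta_i)$, i.e.\ a quantity of order $g(R)$, while the total possible recovery of $\varphi(u')$ across the subsequent negativity interval is at most $\|a^-\|_{L^\infty}$ times an increment of $G$ along the descending solution, i.e.\ of order $G(R)$. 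Condition $(g_{\mathrm{SE}})$ is exactly the statement that the gain of order $g$ beats the loss of order $G$, so the slope stays below $-1+\varepsilon$ through every subsequent interval; the contradiction is then with periodicity ($\int_0^T u'=0$) or with $u'(T)=0$, not with $\int_0^T a\,g(u)\,\mathrm{d}t=0$. You correctly identify that the super-exponential hypothesis must enter here, but your sketch supplies neither the quantity that is being compared ($g$ versus $G$) nor the object being tracked ($\varphi(u')$ along the negativity intervals), so the decisive step of the proof is missing.
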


A more general version of the above result, dealing with equation \eqref{eq-intro} paired with either periodic or Neumann boundary conditions, 
will be given in Theorem~\ref{th-main-ex}.

The proof of these results relies on a topological degree technique, following a line of research extensively developed in recent years (see \cite{Fe-18book} and the references therein).
More precisely, after having converted the boundary value problem into a fixed point equation in a Banach space $X$,
a suitably defined topological degree is evaluated on both small balls and large balls centered at the origin of $X$:
as a consequence of the assumptions on $g(u)$, these degrees turn out to be different, so that the existence of a solution follows by the excision property (its positivity is then recovered via a maximum principle argument). 
This strategy is, of course, very typical when dealing with superlinear problems associated with semilinear equations.
Here, however, the homotopy argument needed for the evaluation of the degree on large balls is quite subtle, 
due to the the interplay between the differential operator and the super-exponential behavior of $g(u)$ at infinity. 
This eventually provides some insights on an unexpected dynamics of large solutions to equation \eqref{eq-intro}
(see Remark~\ref{rem-3.1} for more comments about this).

\medskip

The paper is organized as follows. In Section~\ref{section-2}, we state our main result (Theorem~\ref{th-main-ex}) for a general class  of weights $a(t)$ and nonlinearities $g(u)$, the key super-exponential assumptions being given by condition $(g_{\mathrm{SE}})$. Some comments on the hypotheses and some numerical simulations are also given. Section~\ref{section-3} contains the details of the proof.

\section{Statement of the main result}\label{section-2}

In this section, we give the statement of our main result, dealing with the boundary value problem
\begin{equation}\label{eq-main-phi}
\begin{cases}
\, \Biggl{(} \dfrac{u'}{\sqrt{1-(u')^{2}}}\Biggr{)}' + a(t) g(u) = 0, \vspace{0.1cm}\\
\, \mathfrak{B}(u)=0,
\end{cases}
\end{equation}
where the boundary operator $\mathfrak{B}\colon\mathcal{C}^{1}(\mathopen{[}0,T\mathclose{]})\to\mathbb{R}^{2}$ is either of periodic or Neumann type, that is
\begin{equation}\label{b-per}
\mathfrak{B}(u)=\bigl{(}u(T)-u(0),u'(T)-u'(0)\bigr{)}
\end{equation}
or
\begin{equation}\label{b-neu}
\mathfrak{B}(u)=\bigl{(}u'(0),u'(T)\bigr{)}.
\end{equation}
We also assume that $g\colon \mathopen{[}0,+\infty\mathclose{[}\to \mathopen{[}0,+\infty\mathclose{[}$ is a continuous function
satisfying 
\begin{itemize}[leftmargin=30pt,labelsep=12pt,topsep=5pt]
\item [$(g_{*})$] \textit{$g(0) = 0$ and $g(u) > 0$ for $u>0$},
\end{itemize}
and $a \colon \mathopen{[}0,T\mathclose{]} \to \mathbb{R}$ is a measurable and essentially bounded function satisfying the following technical condition 
\begin{itemize}[leftmargin=30pt,labelsep=12pt,topsep=5pt]
\item [$(a_{*})$]
\textit{there exists a finite number of points
\begin{equation*}
0 = \tau_{0} \leq \sigma_{1} < \tau_{1} < \sigma_{2} < \tau_{2} < \ldots < \sigma_{m} < \tau_{m} \leq \sigma_{m+1} = T
\end{equation*}
such that
\begin{align*}
&\qquad\qquad a(t)>0, \; \text{ for a.e.~$t\in I^{+}_{i}= \mathopen{[}\sigma_{i},\tau_{i}\mathclose{]}$, for $i=1,\ldots,m$,} \\
&\qquad\qquad a(t)\leq0, \; \text{ for a.e.~$t\in I^{-}_{i}= \mathopen{[}\tau_{i},\sigma_{i+1}\mathclose{]}$, for $i=0,\ldots,m$.}
\end{align*}}
\end{itemize}
Observe that the above condition is certainly satisfied if $a(t)$ has a finite number of zeros in $\mathopen{[}0,T\mathclose{[}$ (as assumed in Theorem~\ref{th-intro}). However, it also allows the presence of intervals on which $a(t)$ vanishes: indeed, it could happen that 
$a(t) \equiv 0$ on $I^{-}_{i}$ for some $i$. Basically, the only situation to be prevented is an infinite number of changes of sign for $a(t)$.

\begin{remark}\label{rem-nodal}
As well known, the boundary value problem \eqref{eq-main-phi} with the periodic boundary conditions $u(0)=u(T)$, $u'(0)=u'(T)$ is equivalent to the search for $T$-periodic solutions of the equation, with $a(t)$ extended on the real line by $T$-periodicity. In view of this remark, in this situation it is not restrictive to suppose that 
\begin{equation*}
\sigma_{1}=0 \quad \text{and} \quad \tau_{m} < T,
\end{equation*}
namely that in $\mathopen{[}0,T\mathclose{]}$ the weight is positive in a right neighborhood of $t=0$ and less than or equal to zero in a left neighborhood of $t=T$: indeed, this can always be achieved by a time-shift of the equation. This observation will be useful in Section~\ref{section-3}, when proving the main result.
\hfill$\lhd$
\end{remark}

In order to state our theorem, we further need to introduce some constants depending on the behavior of $a(t)$ on the positivity intervals.
Precisely, we define, for $i=1,\ldots,m$, the constant $A_{i}$ as the minimum of the $L^{1}$-norm of $a(t)$ over all the intervals of length 
$(\tau_{i}-\sigma_{i})/4$ contained in $I^{+}_{i}$, that is
\begin{equation}\label{def-Ai}
A_{i} = \min \biggl{\{} \|a\|_{L^{1}(t_1,t_2)} \colon \mathopen{[}t_1,t_2\mathclose{]}\subseteq I^{+}_{i}, t_2-t_1=\frac{\tau_{i}-\sigma_{i}}{4} \biggr{\}}.
\end{equation}
Notice that the existence of the minimum is ensured by the continuity of the function $A(t) = \int_{0}^{t} a(\xi) \,\mathrm{d}\xi$ and the fact that
\begin{equation*}
A_{i} = \min_{t \in \bigl{[}\sigma_{i} + \frac{\tau_{i}-\sigma_{i}}{4},\tau_{i}\bigr{]}} \biggl{(} A(t) - A\biggl{(}t-\frac{\tau_{i}-\sigma_{i}}{4}\biggr{)} \biggr{)}.
\end{equation*}
Furthermore, $A_{i} > 0$, since $a(t) > 0$ a.e.~in $I^{+}_{i}$ by condition $(a_{*})$. 

The main result of the present paper reads as follows (in condition $(g_{\mathrm{SE}})$, the symbol $a^-$ stands for the negative part of the weight function, that is, $a^-(t) = -\min\{a(t),0\}$).

\begin{theorem}\label{th-main-ex}
Let $a \colon \mathopen{[}0,T\mathclose{]}\to\mathbb{R}$ be a measurable and essentially bounded function satisfying $(a_{*})$ and 
\begin{itemize}[leftmargin=34pt,labelsep=12pt,itemsep=6pt,topsep=5pt]
\item [$(a_{\#})$] $\displaystyle \int_{0}^{T} a(t) \,\mathrm{d}t <0$.
\end{itemize}
Let $g\colon \mathopen{[}0,+\infty\mathclose{[}\to \mathopen{[}0,+\infty\mathclose{[}$ be a continuous function satisfying
$(g_{*})$ and
\begin{itemize}[leftmargin=34pt,labelsep=12pt,itemsep=6pt,topsep=5pt]
\item [$(g_{0})$] $\displaystyle \lim_{u\to 0^{+}} \dfrac{g(u)}{u} = 0$ and $\displaystyle \lim_{\substack{u\to0^{+} \\ \omega\to1}}\dfrac{g(\omega u)}{g(u)}=1$;
\item [$(g_{\nearrow})$] $g$ is monotone non-decreasing on $\mathopen{[}\hat{R},+\infty\mathclose{[}$, for some $\hat{R} > 0$;
\item [$(g_{\mathrm{SE}})$] 
$\displaystyle 
\liminf_{u\to +\infty} \dfrac{g(u)}{G(u)} > \dfrac{\|a^{-}\|_{L^{\infty}(0,T)}}{\displaystyle \min_{i=1,\ldots,m}A_{i}}$, \,
where $\displaystyle G(u)=\int_{0}^{u} g(s)\,\mathrm{d}s$. 
\end{itemize}
Then, the boundary value problem \eqref{eq-main-phi} admits at least one positive solution.
\end{theorem}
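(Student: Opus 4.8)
\emph{Proof strategy.}
First I would put the problem into functional-analytic form. Writing $\phi(s)=s/\sqrt{1-s^{2}}$, which is a homeomorphism of $\mathopen{]}-1,1\mathclose{[}$ onto $\mathbb{R}$ whose inverse $\phi^{-1}\colon\mathbb{R}\to\mathopen{]}-1,1\mathclose{[}$ is bounded, and extending $g$ to the whole line by $g(u)=0$ for $u\leq 0$, I would recast \eqref{eq-main-phi} (in either the periodic or the Neumann case, normalising the weight as in Remark~\ref{rem-nodal}) as a fixed point equation $u=\mathcal{T}(u)$ in $X=\mathcal{C}^{1}(\mathopen{[}0,T\mathclose{]})$, with $\mathcal{T}$ completely continuous; this is the standard scheme for $\phi$-Laplacian boundary value problems (cf.~\cite{Fe-18book}). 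The boundedness of $\phi^{-1}$ furnishes the a priori bound $\|u'\|_{\infty}<1$ at no cost and yields the compactness needed to define the Leray--Schauder degree $\deg(I-\mathcal{T},B_{\rho},0)$ on balls whose boundary is free of fixed points. A maximum-principle argument based on $(g_{*})$, $(a_{*})$ and $(a_{\#})$ will eventually guarantee that any nontrivial fixed point is strictly positive, so that the extension of $g$ is harmless.

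Second, I would compute the degree on a small ball. The superlinearity at the origin in $(g_{0})$ makes the nonlinear part negligible with respect to the identity near $0$; the accompanying regularity condition $\lim_{u\to0^{+},\,\omega\to1}g(\omega u)/g(u)=1$ provides the uniform control under rescaling that keeps a homotopy switching off the nonlinearity admissible. This yields $\deg(I-\mathcal{T},B_{r},0)=1$ for all sufficiently small $r>0$.

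The core of the argument is the evaluation of the degree on a large ball, where $(g_{\nearrow})$ and $(g_{\mathrm{SE}})$ come into play and where the interplay between the differential operator and the super-exponential growth makes the analysis delicate. The structural starting point is that, since $(\phi(u'))'=-a(t)g(u)$, on each positivity interval $I_{i}^{+}$ the function $\phi(u')$ is non-increasing, so that $u$ is concave there, whereas on each interval $I_{i}^{-}$ it is convex. The natural bookkeeping device is the energy $K(s)=1/\sqrt{1-s^{2}}-1$, which obeys the identity $\tfrac{\mathrm{d}}{\mathrm{d}t}K(u')=-a(t)g(u)\,u'$ and blows up exactly as $|u'|\to1$. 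On a positivity interval a large value of $u$, through $(g_{\nearrow})$, makes $g(u)$ large on a subinterval of length $(\tau_{i}-\sigma_{i})/4$; integrating the equation there and invoking the definition \eqref{def-Ai} of $A_{i}$ forces $\phi(u')$, hence $K(u')$, to jump by an amount comparable to $A_{i}\,g(\cdot)$, driving $u'$ towards $-1$. The energy that the adjacent interval $I_{i}^{-}$ can instead dissipate is controlled, via the same identity and $\|u'\|_{\infty}<1$, by $\|a^{-}\|_{L^{\infty}(0,T)}$ times an increment of the primitive $G$. Condition $(g_{\mathrm{SE}})$, with its precise constant $\|a^{-}\|_{L^{\infty}(0,T)}/\min_{i}A_{i}$ and with $(g_{\nearrow})$ allowing one to work at a single growth level, is exactly what makes the production outweigh the dissipation. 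I would encode this into an admissible pushing homotopy $u=\mathcal{T}(u)+\alpha\,\mathbf{1}$, $\alpha\geq0$, verify that it has no fixed point on $\partial B_{R}$ for $R$ large, and conclude $\deg(I-\mathcal{T},B_{R},0)=0$.

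Finally, since the two degrees differ, the excision and additivity properties of the degree produce a fixed point of $I-\mathcal{T}$ in the annulus $B_{R}\setminus\overline{B_{r}}$, that is, a nontrivial solution of \eqref{eq-main-phi}, which the maximum principle upgrades to a positive one. I expect the decisive obstacle to be the large-ball step: because the operator caps the slope at $\|u'\|_{\infty}<1$, a large solution must spend its whole velocity budget on the positivity intervals, and the delicate point is to calibrate, through the energy identity and the constants $A_{i}$ and $\|a^{-}\|_{L^{\infty}}$, the exact threshold in $(g_{\mathrm{SE}})$ beyond which the energy produced on $I_{i}^{+}$ can no longer be reconciled with the convex, dissipating behaviour on $I_{i}^{-}$, and to keep the corresponding homotopy admissible throughout. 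This is precisely where the super-exponential comparison between $g$ and $G$, and the unexpected dynamics of large solutions alluded to in Remark~\ref{rem-3.1}, are essential.
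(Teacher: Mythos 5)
Your strategy coincides with the paper's proof: a Leray--Schauder degree computation giving degree $\pm 1$ on a small ball via $(g_{0})$ and degree $0$ on a large ball via a pushing homotopy, with the decisive a priori bound obtained exactly as you describe, by balancing the drop of $\varphi(u')$ produced on the positivity intervals (quantified through the constants $A_{i}$) against the recovery allowed on the negativity intervals (quantified by $\|a^{-}\|_{L^{\infty}}$ times an increment of the primitive $G$), a balance that $(g_{\mathrm{SE}})$ is calibrated to win. The one implementation detail where the paper differs is that the pushing term is $\alpha v(t)$ with $v$ the indicator function of $\bigcup_{i} I^{+}_{i}$ rather than $\alpha\mathbf{1}$: keeping the perturbation supported in the positivity set preserves the convexity of solutions on the intervals $I^{-}_{i}$, which is what forces the maximum point of a putative solution with $\|u\|_{\infty}=R$ to lie in some $I^{+}_{i}$ --- the starting point of the whole estimate --- so you would need to adjust your choice of pushing direction accordingly.
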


As usual, a solution to \eqref{eq-main-phi} is meant as a continuously differentiable function $u \colon \mathopen{[}0,T\mathclose{]} \to \mathbb{R}$, such that $|u'(t)| < 1$ for every $t \in \mathopen{[}0,T\mathclose{]}$, the map
$t \mapsto {u'(t)}/{\sqrt{1-(u'(t))^{2}}}$ is absolutely continuous on $\mathopen{[}0,T\mathclose{]}$, the differential equation is satisfied for a.e.~$t \in \mathopen{[}0,T\mathclose{]}$ and $\mathfrak{B}(u)=0$. We say that a solution $u(t)$ is positive if $u(t)>0$ for every $t \in \mathopen{[}0,T\mathclose{]}$.

Some more comments about the hypotheses on $g(u)$ in Theorem~\ref{th-main-ex} are now in order.

\begin{remark}[Condition for $g(u)$ at zero]\label{rem-2.2}
Condition $(g_{0})$ is the same as the one used in \cite{BoFe-PP}: it requires a superlinear growth of $g(u)$ near zero 
(that is, $g(u)/u \to 0$ as $u \to 0^{+}$) as well as a so-called \textit{regular oscillation behavior} near zero (cf.~\cite{DjTo-01,FeZa-15ade} for more comments about this kind of assumption). A simple situation in which $(g_{0})$ holds true is when $g(u)$ has a superlinear power-type growth at zero, namely $g(u) \sim C u^p$ for $u \to 0^+$, for some $C > 0$ and $p > 1$.
\hfill$\lhd$
\end{remark}

\begin{remark}[Condition for $g(u)$ at infinity]\label{rem-2.3}
Besides the monotonicity assumption $(g_{\nearrow})$, condition $(g_{\mathrm{SE}})$ is the crucial hypothesis of Theorem~\ref{th-main-ex}. 
It implies a rapid growth for $g(u)$ at infinity: indeed, since the ratio $g(u)/G(u)$ is the logarithmic derivative of $G(u)$, an integration yields
\begin{equation}\label{G-infty}
G(u) \geq \alpha \, e^{K u}, \quad \text{for all $u \geq R$,}
\end{equation}
for suitable constants $\alpha, R > 0$ and $K = \|a^{-}\|_{L^{\infty}(0,T)}/\min_{i} A_{i}$, and finally
\begin{equation}\label{g-infty}
g(u) \geq \alpha K e^{K u}, \quad \text{for all $u \geq R$.}
\end{equation}
Hence, the growth at infinity of $g(u)$ is at least of exponential type. 

The more relevant situations in which condition $(g_{\mathrm{SE}})$ holds true are the ones for which
\begin{equation*}
\lim_{u\to +\infty} \dfrac{g(u)}{G(u)} = +\infty.
\end{equation*}
In this case, a minor variant of the above argument shows that, for every $L > 0$, it holds that
\begin{equation*}
g(u) \geq \beta_{L} e^{Lu}, \quad \text{for all $u \geq R_{L}$,}
\end{equation*}
for suitable constants $\beta_{L}, R_{L} > 0$. In this case, the growth at infinity of $g(u)$ is thus ``super-exponential''.

Incidentally, let us notice that assumption $(g_{\mathrm{SE}})$ prevents the validity of condition
\eqref{eq-nonex}, which, as discussed in the introduction, implies a growth of $g(u)$ at most of power type. This is consistent with the fact that, when \eqref{eq-nonex} holds, the boundary value problem~\eqref{eq-main-phi} can be non-solvable.
\hfill$\lhd$
\end{remark}

Let us observe that, in view of the generalized version of L'H\^{o}pital rule \cite{Ta-52}, assumption $(g_{\mathrm{SE}})$ holds true whenever the following condition is satisfied:
\begin{itemize}[leftmargin=34pt,labelsep=12pt,itemsep=6pt,topsep=5pt]
\item [$(g_{\mathrm{SE}}')$] \textit{$g(u)$ is differentiable in a neighborhood of infinity and 
\begin{equation*}
\liminf_{u\to +\infty} \dfrac{g'(u)}{g(u)} > \dfrac{\|a^{-}\|_{L^{\infty}(0,T)}}{\displaystyle \min_{i=1,\ldots,m}A_{i}}.
\end{equation*}
}
\end{itemize}
In this case, the monotonicity assumption $(g_{\nearrow})$ is automatically guaranteed (since $g'(u) > 0$ for $u$ large). Hence, we can provide the following corollary of Theorem~\ref{th-main-ex}.

\begin{corollary}\label{cor-2.1}
Let $a \colon \mathopen{[}0,T\mathclose{]}\to\mathbb{R}$ be a measurable and essentially bounded function satisfying $(a_{*})$ and 
$(a_{\#})$.
Let $g\colon \mathopen{[}0,+\infty\mathclose{[}\to \mathopen{[}0,+\infty\mathclose{[}$ be a continuous function satisfying
$(g_{*})$, $(g_{0})$ and $(g_{\mathrm{SE}}')$.
Then, the boundary value problem \eqref{eq-main-phi} admits at least one positive solution.
\end{corollary}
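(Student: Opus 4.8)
The plan is to deduce Corollary~\ref{cor-2.1} directly from Theorem~\ref{th-main-ex}, since the only difference between the two statements is that the pair of hypotheses $(g_{\nearrow})$ and $(g_{\mathrm{SE}})$ has been replaced by the single, more easily checkable condition $(g_{\mathrm{SE}}')$. The weight assumptions $(a_{*})$ and $(a_{\#})$, as well as $(g_{*})$ and $(g_{0})$, appear verbatim in both statements, so there is nothing to verify for them. Hence the whole task reduces to showing that $(g_{*})$, $(g_{0})$ and $(g_{\mathrm{SE}}')$ together imply $(g_{\nearrow})$ and $(g_{\mathrm{SE}})$.

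First I would establish the monotonicity $(g_{\nearrow})$. Writing $K = \|a^{-}\|_{L^{\infty}(0,T)}/\min_{i}A_{i} \geq 0$, condition $(g_{\mathrm{SE}}')$ provides $\hat{R} > 0$ and $\delta > 0$ such that $g'(u)/g(u) \geq K + \delta > 0$ for all $u \geq \hat{R}$. Since $g(u) > 0$ for $u > 0$ by $(g_{*})$, this forces $g'(u) > 0$ on $\mathopen{[}\hat{R},+\infty\mathclose{[}$, so $g$ is increasing there, which is exactly $(g_{\nearrow})$. Integrating the differential inequality $g'(u) \geq (K+\delta)\,g(u)$ moreover yields $g(u) \geq g(\hat{R})\,e^{(K+\delta)(u-\hat{R})}$, whence $g(u) \to +\infty$ and consequently $G(u) \to +\infty$ as $u \to +\infty$.

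Next I would verify $(g_{\mathrm{SE}})$ by comparing the logarithmic-derivative ratio $g/G$ with $g'/g$. Since $G'(u) = g(u)$, the quotient appearing in $(g_{\mathrm{SE}})$ reads $g(u)/G(u) = G'(u)/G(u)$, and the associated ratio of derivatives of numerator and denominator is $g'(u)/g(u)$. As both $g(u)$ and $G(u)$ diverge to $+\infty$ by the previous step, the generalized L'H\^{o}pital rule of \cite{Ta-52} for the indeterminate form $\infty/\infty$ gives $\liminf_{u\to+\infty} g'(u)/g(u) \leq \liminf_{u\to+\infty} g(u)/G(u)$. Combined with $(g_{\mathrm{SE}}')$, this delivers $\liminf_{u\to+\infty} g(u)/G(u) \geq \liminf_{u\to+\infty} g'(u)/g(u) > K$, which is precisely $(g_{\mathrm{SE}})$.

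With $(g_{\nearrow})$ and $(g_{\mathrm{SE}})$ now established, all hypotheses of Theorem~\ref{th-main-ex} hold, and the existence of a positive solution of \eqref{eq-main-phi} follows immediately. I do not anticipate any genuine obstacle in this argument, as it is a purely deductive passage to a special case; the only points deserving mild care are the correct orientation of the inequality in the L'H\^{o}pital rule for the lower limit (the quotient of derivatives bounds the quotient of functions \emph{from below}) and the verification of its applicability hypothesis, namely the divergence of the denominator $G(u)$, which is secured by the exponential lower bound obtained in the first step.
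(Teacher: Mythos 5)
Your argument is correct and coincides with the paper's own justification: the text preceding Corollary~\ref{cor-2.1} derives $(g_{\mathrm{SE}})$ from $(g_{\mathrm{SE}}')$ via the generalized L'H\^{o}pital rule of \cite{Ta-52} applied to $g/G$ (with derivative quotient $g'/g$ and denominator $G\to+\infty$), and notes that $(g_{\nearrow})$ is automatic since $g'>0$ for large $u$, exactly as you do. Your write-up merely makes explicit the exponential lower bound guaranteeing $G(u)\to+\infty$ and the orientation of the liminf inequality, both of which are correct.
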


Theorem~\ref{th-intro} follows directly from Corollary~\ref{cor-2.1}: indeed, it is easily checked that the function $g(u) = e^{u^p}-1$ with $p > 1$ satisfies conditions $(g_{*})$, $(g_{0})$ and $(g_{\mathrm{SE}}')$. See also Figure~\ref{fig-01} for a numerical simulation.
Another example of nonlinear term satisfying the assumptions of Corollary~\ref{cor-2.1} is $g(u) = u^p e^{\kappa u}$ with $p > 1$ and $k$ positive and sufficiently large (see Figure~\ref{fig-02}). 

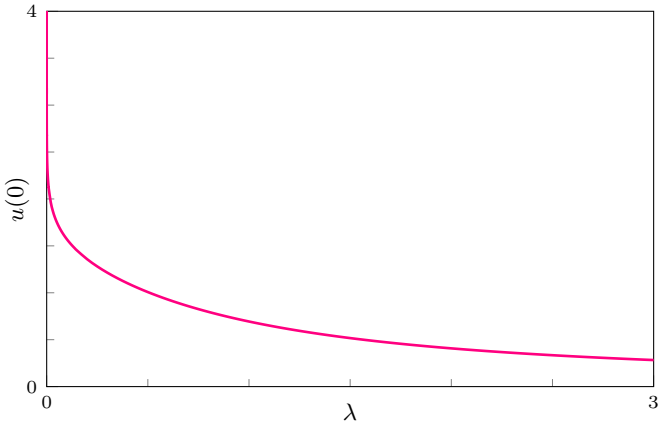
\begin{figure}[!htb]
\centering
\begin{tikzpicture}[scale=1]
\begin{axis}[
  tick pos=left,
  tick label style={font=\scriptsize},
          scale only axis,
  enlargelimits=false,
  xtick={0,3},
  ytick={0,4},
  xlabel={\small $\lambda$},
  ylabel={\small $u(0)$},
  max space between ticks=30,
                minor x tick num=5,
                minor y tick num=7,  
every axis x label/.style={
below,
at={(4cm,0cm)},
  yshift=-3pt
  },
every axis y label/.style={
below,
at={(0cm,2.5cm)},
  xshift=-3pt},
  y label style={rotate=90,anchor=south},
  width=8cm,
  height=5cm,  
  xmin=0,
  xmax=3,
  ymin=0,
  ymax=4]
\addplot [color=BF-pink,line width=1pt,smooth] coordinates {(0,5) (0.00001,3.285425155) (0.00002,3.184202141) (0.00003,3.123684202) (0.00004,3.080131474) (0.00005,3.045986018) (0.00006,3.017845333) (0.00007,2.993879427) (0.00008,2.972988538) (0.00009,2.954459182) (0.0001,2.937801729) (0.00011,2.922665370) (0.00012,2.908790024) (0.00013,2.895977452) (0.00014,2.884073030) (0.00015,2.872953798) (0.00016,2.862520337) (0.00017,2.852691105) (0.00018,2.843398381) (0.00019,2.834585297) (0.0002,2.826203631) (0.00021,2.818212134) (0.00022,2.810575242) (0.00023,2.803262074) (0.00024,2.796245641) (0.00025,2.789502221) (0.00026,2.783010842) (0.00027,2.776752880) (0.00028,2.770711715) (0.00029,2.764872452) (0.0003,2.759221693) (0.00031,2.753747340) (0.00032,2.748438572) (0.00033,2.743285144) (0.00034,2.738278115) (0.00035,2.733409180) (0.00036,2.728670728) (0.00037,2.724055765) (0.00038,2.719557848) (0.00039,2.715171030) (0.0004,2.710889808) (0.00041,2.706709083) (0.00042,2.702624118) (0.00043,2.698630507) (0.00044,2.694724142) (0.00045,2.690901189) (0.00046,2.687158064) (0.00047,2.683491408) (0.00048,2.679898076) (0.00049,2.676375111) (0.0005,2.672919732) (0.001,2.552136569) (0.0015,2.479353222) (0.002,2.426682292) (0.0025,2.385203775) (0.003,2.350889129) (0.0035,2.321566295) (0.004,2.295927275) (0.0045,2.273122009) (0.005,2.252566252) (0.0055,2.233840834) (0.006,2.216634617) (0.0065,2.200710176) (0.007,2.185882136) (0.0075,2.172002941) (0.008,2.158953181) (0.0085,2.146634838) (0.009,2.134966440) (0.0095,2.123879527) (0.01,2.113316005) (0.02,1.965730903) (0.03,1.874425637) (0.04,1.806835435) (0.05,1.752499277) (0.06,1.706679090) (0.07,1.666814589) (0.08,1.631361193) (0.09,1.599313551) (0.1,1.569979788) (0.11,1.542862961) (0.12,1.517593779) (0.13,1.493890254) (0.14,1.471532041) (0.15,1.450343634) (0.2,1.357754115) (0.25,1.280684099) (0.3,1.213914948) (0.35,1.154629560) (0.4,1.101128721) (0.45,1.052303899) (0.5,1.007387157) (0.55,0.9658205698) (0.6,0.9271815080) (0.65,0.8911388518) (0.7,0.8574252858) (0.75,0.8258194272) (0.8,0.7961339301) (0.85,0.7682073682) (0.9,0.7418985618) (0.95,0.7170825438) (1.,0.6936476186) (1.05,0.6714931803) (1.1,0.6505280562) (1.15,0.6306692309) (1.2,0.6118408279) (1.25,0.5939732869) (1.3,0.5770026867) (1.35,0.5608701742) (1.4,0.5455214806) (1.45,0.5309064975) (1.5,0.5169789045) (1.55,0.5036958407) (1.6,0.4910176110) (1.65,0.4789074221) (1.7,0.4673311443) (1.75,0.4562570937) (1.8,0.4456558362) (1.85,0.4355000084) (1.9,0.4257641536) (1.95,0.4164245727) (2.,0.4074591866) (2.05,0.3988474116) (2.1,0.3905700436) (2.15,0.3826091538) (2.2,0.3749479914) (2.25,0.3675708957) (2.3,0.3604632136) (2.35,0.3536112260) (2.4,0.3470020786) (2.45,0.3406237191) (2.5,0.3344648288) (2.55,0.3285148111) (2.6,0.3227636790) (2.65,0.3172020532) (2.7,0.3118211064) (2.75,0.3066125302) (2.8,0.3015684954) (2.85,0.2966816220) (2.9,0.2919449464) (2.95,0.2873518954) (3.,0.2828962595)};
\end{axis}
\end{tikzpicture}
\caption{Bifurcation diagram with bifurcation parameter $\lambda\in\mathopen{[}0,3\mathclose{]}$ for
problem \eqref{eq-main-phi} with Neumann boundary conditions, $a(t) = 1$ on $\mathopen{[}0,1\mathclose{[}$ and $a(t) = -10$ on $\mathopen{[}1,2\mathclose{]}$, $g(u)=g_\lambda(u) = \lambda (e^{u^{2}}-1)$. We have inserted here the parameter $\lambda$ in order to stress that the existence of a positive solution can be ensured for every $\lambda > 0$. It can be proved that the solution $u_\lambda(t)$ explodes when $\lambda \to 0^+$ (cf. Remark~\ref{rem-2.4}); 
however, it appears from the numerical simulation that the velocity of explosion is very slow.}    
\label{fig-01}
\end{figure}

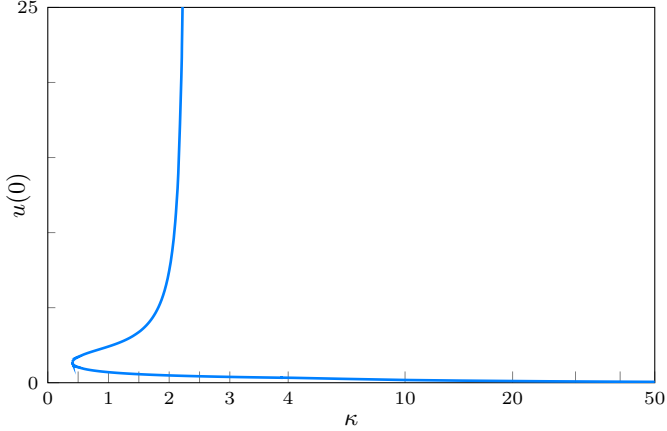
\begin{figure}[!htb]
\centering
\begin{tikzpicture}[scale=1]
\begin{axis}[
  tick pos=left,
  tick label style={font=\scriptsize},
          scale only axis,
  enlargelimits=false,
  xtick={0,0.5,1,1.5,2,2.5,2.99873,3.96072,5.88592,7.65776,8.69422,9.42959,10},
  xticklabels={$0$,,$1$,,$2$,,$3$,$4$,$10$,$20$,,,$50$},
  ytick={0,25},
  xlabel={\small $\kappa$},
  ylabel={\small $u(0)$},
  max space between ticks=30,
                minor x tick num=0,
                minor y tick num=4,  
every axis x label/.style={
below,
at={(4cm,-0.2cm)},
  yshift=-3pt
  },
every axis y label/.style={
below,
at={(0cm,2.5cm)},
  xshift=-3pt},
  y label style={rotate=90,anchor=south},
  width=8cm,
  height=5cm,  
  xmin=0,
  xmax=10,
  ymin=0,
  ymax=25]
\addplot [color=BF-blue,line width=1pt,smooth] coordinates  {(0.4042,1.359794328) (0.4041,1.353988519) (0.4041,1.337915751) (0.4042,1.332171838) (0.4043,1.328215379) (0.4044,1.325001802) (0.4045,1.322226061) (0.4046,1.319747882) (0.4047,1.317488951) (0.4048,1.315400260) (0.4049,1.313448727) (0.405,1.311610772) (0.4075,1.281112381) (0.41,1.261304120) (0.42,1.209585398) (0.43,1.174067652) (0.44,1.145655527) (0.45,1.121504576) (0.46,1.100277545) (0.47,1.081217994) (0.48,1.063848495) (0.49,1.047844228) (0.5,1.032972978) (0.6,0.9216620781) (0.7,0.8458923845) (0.8,0.7879167408) (0.9,0.7409658243) (1.,0.7016064121) (1.1,0.6678221659) (1.2,0.6383161142) (1.3,0.6121996009) (1.4,0.5888350746) (1.5,0.5677486226) (1.6,0.5485779574) (1.7,0.5310396920) (1.8,0.5149079030) (1.9,0.4999995575) (2.,0.4861643014) (2.1,0.4732771477) (2.2,0.4612330913) (2.3,0.4499430924) (2.4,0.4393310208) (2.5,0.4293312976) (2.59996,0.4198869701) (2.69983,0.4109485817) (2.79961,0.4024725123) (2.89925,0.3944203671) (2.99873,0.3867581257) (3.09798,0.3794555146) (3.19693,0.3724854846) (3.29551,0.3658237774) (3.3936,0.3594485613) (3.49107,0.3533401250) (3.58773,0.3474806168) (3.68337,0.3418538228) (3.77769,0.3364449771) (3.87032,0.3312405983) (4.04443,0.3262283480) (5.88592,0.1816510202) (7.65776,0.1121706449) (8.69422,0.08351003500) (9.42959,0.06739322586) (10,0.05691657637)};

\addplot [color=BF-blue,line width=1pt,smooth] coordinates  {(0.4042,1.332171838) (0.4041,1.337915751) (0.4041,1.353988519) (0.4042,1.359794328) (0.4043,1.363816722) (0.4044,1.367096223) (0.4045,1.369936674) (0.4045,1.369936674) (0.4046,1.372480467) (0.4047,1.374804549) (0.4048,1.376958364) (0.4049,1.378975237) (0.4049,1.378975237) (0.405,1.380878275) (0.4075,1.413001954) (0.41,1.434432176) (0.42,1.492615193) (0.43,1.534562218) (0.44,1.569370884) (0.45,1.599889523) (0.46,1.627457829) (0.47,1.652836142) (0.48,1.676504662) (0.49,1.698791219) (0.5,1.719930778) (0.6,1.893814461) (0.7,2.033981836) (0.8,2.161076303) (0.9,2.284932717) (1.,2.412493115) (1.1,2.550097950) (1.2,2.704705994) (1.3,2.884926493) (1.4,3.102228543) (1.5,3.372795150) (1.55,3.535191155) (1.6,3.720986893) (1.65,3.935649452) (1.7,4.186430373) (1.75,4.483178581) (1.8,4.839645534) (1.85,5.275645995) (1.9,5.820829826) (1.95,6.521659190) (2.,7.455302297) (2.05,8.760026142) (2.1,10.71060032) (2.15,13.94194837) (2.2,20.32681447) (2.21,22.45505803) (2.22,25.06847328) (2.23,28.40144733)};
\end{axis}
\end{tikzpicture}
\caption{Bifurcation diagram with bifurcation parameter $\kappa\in\mathopen{[}0,50\mathclose{]}$ for
problem \eqref{eq-main-phi} with Neumann boundary conditions, $a(t) = 1$ on $\mathopen{[}0,1\mathclose{[}$ and $a(t) = -10$ on $\mathopen{[}1,2\mathclose{]}$, 
$g(u)=g_\kappa(u) = u^{2}e^{\kappa u}$. The horizontal  axis is linear up to $2.5$ and then smoothly switches to a logarithmic scale. In this case, $g'(u)= e^{\kappa u} (\kappa u^2 + 2u)$ so that $g'(u)/g(u)\to \kappa$ as $u\to+\infty$. Moreover, $A_1 = 1/4$ and $\|a^{-}\|_{L^{\infty}(0,2)} = 10$. Hence, condition $(g_{\mathrm{SE}}')$ 
reads as $\kappa > 40$ (it is possible to see that $g(u)/G(u) \to \kappa$ as $u \to +\infty$, as well; hence, there is no advantage in considering condition $(g_{\mathrm{SE}})$ instead of $(g_{\mathrm{SE}}')$). The numerical simulation, however, seems to suggest that the existence of a positive solution is ensured for a larger range of the parameter $\kappa$; moreover, multiplicity phenomena seem to appear. On the other hand, it is worth noticing that the bifurcation branch does not project on all the positive values of $\kappa$. }     
\label{fig-02}
\end{figure}

\begin{remark}\label{rem-2.4}
In the case of the parameter-dependent boundary value problem
\begin{equation}\label{eq-main2-phi}
\begin{cases}
\, \Biggl{(} \dfrac{u'}{\sqrt{1-(u')^{2}}}\Biggr{)}' + \lambda a(t) g(u) = 0, \vspace{0.1cm}\\
\, \mathfrak{B}(u)=0,
\end{cases}
\end{equation}
with $a(t)$ and $g(u)$ satisfying the assumptions of Theorem~\ref{th-main-ex}, the existence of a positive solution $u_\lambda(t)$ follows for any
$\lambda > 0$. We claim that 
\begin{equation*}
\lim_{\lambda \to 0^+} \| u_\lambda \|_{\infty} = +\infty,
\end{equation*}
where $\| u_\lambda \|_{\infty} = \max_{t \in \mathopen{[}0,T\mathclose{]}} | u_\lambda(t) |$.
Indeed, if we assume by contradiction that $\| u_\lambda \|_{\infty} \leq M$ for some $M > 0$ and $\lambda$ small, then $u_\lambda(t)$ solves problem \eqref{eq-main2-phi} with the modified nonlinearity $g_M(u) = g(\min\{u,M\})$.
Hence, we enter the setting of \cite[Theorem~3.2]{BoFe-PP}, implying non-existence of solutions for $\lambda$ small enough.

Incidentally, let us also observe that the existence of a family of positive solutions $u_\lambda(t)$ for $\lambda$ large
was already proved in \cite[Theorem~3.3]{BoFe-PP}, dealing with the periodic problem (the Neumann case can be treated as well,  
using the approach in \cite{BoFe-20na}). By \cite[Theorem~4.1]{BoFe-PP}, it holds that $\| u_\lambda \|_{\infty} \to
0$ as $\lambda \to +\infty$.
\hfill$\lhd$
\end{remark}

\section{Proof of the main result}\label{section-3}

This section is devoted to the proof of Theorem~\ref{th-main-ex}. We first present a semi-abstract result for a general class of second-order problems, which is obtained via topological degree techniques (see Section~\ref{section-3.1}); then we apply it in our framework to prove Theorem~\ref{th-main-ex} (see Section~\ref{section-3.2}).

\subsection{A semi-abstract result}\label{section-3.1}

Let us consider the boundary value problem
\begin{equation}\label{pb-main}
\begin{cases}
\, (\varphi(u'))' + a(t) g(u) = 0,\\
\, \mathfrak{B}(u)=0,
\end{cases}
\end{equation}
where $\varphi\colon \mathopen{]}-1,1\mathclose{[} \to \mathbb{R}$ is a homeomorphism with $\varphi(0)=0$ and the boundary operator $\mathfrak{B}(u)$ is of periodic or Neumann type (see \eqref{b-per} and \eqref{b-neu}). 
Notice that problem \eqref{eq-main-phi} enters this setting with the choice
\begin{equation}\label{def-phi}
\varphi(s) = \dfrac{s}{\sqrt{1-s^{2}}}.
\end{equation}
In this more general context, the following result holds true.

\begin{lemma}\label{lem-deg}
Let $a \colon \mathopen{[}0,T\mathclose{]} \to \mathbb{R}$ be a Lebesgue integrable function satisfying $(a_{\#})$ and let $g \colon \mathopen{[}0,+\infty\mathclose{[} \to \mathopen{[}0,+\infty\mathclose{[}$ be a continuous function satisfying $(g_{*})$. Assume that there exist $r,R\in\mathbb{R}$ with $0<r<R$ and $v\in L^{1}(0,T)$ with $v\geq0$ and $v\not\equiv0$ such that the following properties hold.
\begin{itemize}[leftmargin=34pt,labelsep=12pt,itemsep=6pt,topsep=5pt]
\item[$(H_{1})$]
If $\vartheta\in \mathopen{]}0,1\mathclose{]}$ and $u(t)$ is a solution of
\begin{equation}\label{eq-lem-deg1}
\begin{cases}
\, (\varphi(u'))' + \vartheta a(t) g(u) = 0,\\
\, \mathfrak{B}(u)=0,
\end{cases}
\end{equation}
then $\|u\|_{\infty} \neq r$.
\item[$(H_{2})$]
If $\alpha \geq 0$ and $u(t)$ is a solution of
\begin{equation}\label{eq-lem-deg0}
\begin{cases}
\, (\varphi(u'))' + a(t) g(u) + \alpha v(t) = 0,\\
\, \mathfrak{B}(u)=0,
\end{cases}
\end{equation}
then $\|u\|_{\infty}\neq R$.
\item[$(H_{3})$]
There exists $\alpha_{0} \geq 0$ such that problem \eqref{eq-lem-deg0}, with $\alpha=\alpha_{0}$, has no solutions $u(t)$ with $\|u\|_{\infty}\leq R$.
\end{itemize}
Then, there exists a solution $u(t)$ of \eqref{pb-main} with $r<\|u\|_{\infty} = \displaystyle \max_{t\in\mathopen{[}0,T\mathclose{]}} u(t) <R$.
\end{lemma}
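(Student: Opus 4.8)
The plan is to recast \eqref{pb-main} as a fixed point problem and to run the classical superlinear comparison of a topological degree on a small ball $B_{r}=\{u:\|u\|_{\infty}<r\}$ against the one on a large ball $B_{R}$, concluding by excision. To set the degree up I would first extend $g$ to a continuous function $\hat g$ on all of $\mathbb{R}$, agreeing with $g$ on $\mathopen{[}0,+\infty\mathclose{[}$ and with $\hat g(s)<0$ for $s<0$. The negativity for $s<0$ serves two purposes: it prevents nonzero constants from being spurious solutions of the extended problem (so that the degree is well defined on the boundary spheres, a point where the naive extension by $0$ fails, since negative constants would solve the problem and sit on $\partial B_\rho$), and it fixes the orientation of the reduced map below. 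Using the degree theory for the operator $u\mapsto(\varphi(u'))'$ with periodic or Neumann conditions (in the spirit of Manásevich–Mawhin / Bereanu–Mawhin), \eqref{pb-main} becomes an equation $u=\Phi(u)$ with $\Phi$ completely continuous on $X=\mathcal{C}(\mathopen{[}0,T\mathclose{]})$, and I would work with the Leray–Schauder degree $\deg(I-\Phi,B_{\rho},0)$.

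For the small ball, hypothesis $(H_{1})$ says precisely that the homotopy \eqref{eq-lem-deg1}, with $\vartheta\in\mathopen{]}0,1\mathclose{]}$, has no solution on $\partial B_{r}$. Together with the fact that the averaged map $\xi\mapsto\int_{0}^{T}a(t)\hat g(\xi)\,\mathrm{d}t=\hat g(\xi)\int_{0}^{T}a(t)\,\mathrm{d}t$ is nonzero at $\xi=\pm r$ and changes sign there — indeed $\int_{0}^{T}a<0$ by $(a_{\#})$, while $\hat g(r)=g(r)>0$ and $\hat g(-r)<0$ — a continuation theorem identifies $\deg(I-\Phi,B_{r},0)$ with the Brouwer degree on $(-r,r)$ of this averaged map, which is $\pm1$, hence nonzero. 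This is exactly the step where the degeneracy of the kernel of $(\varphi(u'))'$ (the constants) at the endpoint $\vartheta\to0^{+}$ must be absorbed, and it is the reason the sign of $\hat g$ on $\mathopen{]}-\infty,0\mathclose{[}$ matters.

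For the large ball I would use the second homotopy \eqref{eq-lem-deg0} in the parameter $\alpha$. By $(H_{2})$ there is no solution on $\partial B_{R}$ for any $\alpha\in\mathopen{[}0,\alpha_{0}\mathclose{]}$, so $\deg(I-\Phi_{\alpha},B_{R},0)$ is independent of $\alpha$; by $(H_{3})$ the problem at $\alpha=\alpha_{0}$ has no solution at all in $\overline{B_{R}}$, whence this degree is $0$. Thus $\deg(I-\Phi,B_{R},0)=0$ for the original problem. Since $(H_{1})$ at $\vartheta=1$ and $(H_{2})$ at $\alpha=0$ forbid solutions on $\partial B_{r}$ and on $\partial B_{R}$, the additivity property gives $\deg(I-\Phi,B_{R}\setminus\overline{B_{r}},0)=\deg(I-\Phi,B_{R},0)-\deg(I-\Phi,B_{r},0)\neq0$, and hence a solution $u$ with $r<\|u\|_{\infty}<R$.

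The step I expect to be the genuine obstacle is not any of these degree computations — given $(H_{1})$–$(H_{3})$ they are largely routine — but rather the control of the \emph{sign} of the solution, i.e.\ the identity $\|u\|_{\infty}=\max_{t}u(t)$. This is needed twice: to know that the found solution is a true (nonnegative) solution of \eqref{pb-main} rather than an artifact of the extension, and, already upstream, to guarantee that the extended solutions appearing on the boundary spheres are nonnegative, so that $(H_{1})$–$(H_{3})$, which are stated for the original problem, actually apply to them. This has to be recovered by a maximum-principle argument: on the set where a solution is negative the term $a\hat g(u)$ intervenes and, since $a$ is sign-indefinite, a purely pointwise principle fails, so one must exploit the precise form of $\hat g$ together with the concavity/sign structure forced by the boundary conditions and the forcing $v$. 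Reconciling this positivity requirement with the orientation forced on $\hat g$ in the small-ball computation is, in my view, the delicate core of the proof.
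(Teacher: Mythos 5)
Your overall architecture coincides with the paper's: reformulate as a fixed point problem \`a la Man\'asevich--Mawhin, compute a degree $\pm 1$ on the small ball via the homotopy in $\vartheta$ and the reduction to the averaged finite-dimensional map (whose sign is controlled by $(a_{\#})$), compute degree $0$ on the large ball via the homotopy in $\alpha$ together with $(H_{3})$, and conclude by additivity/excision. Those degree computations are indeed routine given $(H_{1})$--$(H_{3})$, and you carry them out correctly.

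The genuine gap is exactly the point you flag at the end and then leave unresolved: the nonnegativity of \emph{all} solutions of the extended problem. This is not a cosmetic afterthought; it is needed \emph{before} any degree is computed, since $(H_{1})$--$(H_{3})$ only concern nonnegative solutions, so the degrees on $\partial B_{r}$ and $\partial B_{R}$ are well defined only once you know that the extended problem admits no sign-changing or negative solutions on those spheres. With your choice of extension --- replacing $g$ by a function $\hat g$ with $\hat g(s)<0$ for $s<0$ while keeping the indefinite weight $a(t)$ in front --- the term $a(t)\hat g(u)$ has no definite sign where $u<0$, and, as you yourself observe, no pointwise maximum principle applies; you offer no substitute argument, so the proof does not close. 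The paper sidesteps the difficulty by extending the \emph{whole} nonlinear term rather than $g$ alone: it sets $f(t,u)=a(t)g(u)$ for $u\geq 0$ and $f(t,u)=-u$ for $u<0$, discarding the weight on the negative range. Then $f(t,u)+\alpha v(t)>0$ wherever $u<0$, so $\varphi(u')$ is strictly decreasing there and a negative minimum is impossible under either set of boundary conditions; at the same time the reduced map $f^{\#}(s)$ equals $g(s)\int_{0}^{T}a<0$ for $s>0$ and $-s>0$ for $s<0$, so it satisfies $f^{\#}(s)s<0$ and still yields Brouwer degree $\pm1$, and negative constants are excluded. In other words, the ``reconciliation'' you describe as the delicate core is a false dilemma created by insisting on extending $g$ rather than $a(t)g(u)$: the paper's extension achieves the orientation, the exclusion of spurious constants, and the maximum principle simultaneously. (A secondary technical point: in the periodic case the natural fixed-point space is $\mathcal{C}(\mathopen{[}0,T\mathclose{]},\mathbb{R}^{2})$ with $w=(u,\varphi(u'))$ and the relevant open set $\Omega_{R}=B(0,R)\times\mathcal{C}(\mathopen{[}0,T\mathclose{]})$ is unbounded, so one must invoke the degree for locally compact operators rather than the plain Leray--Schauder degree on balls of $\mathcal{C}(\mathopen{[}0,T\mathclose{]})$.)
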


\begin{proof}
The proof is based on a topological degree argument and follows a scheme similar to \cite[Section~2 and Appendix~B]{BoFe-PP} for the periodic problem and \cite[Section~2]{BoFe-20na} for the Neumann problem. Due to the present unified setting and for the sake of completeness, we give here a sketch of the proof.

As a first step, we introduce the $L^{1}$-Carath\'{e}odory function
\begin{equation*}
f(t,u) = 
\begin{cases}
\, a(t)g(u), & \text{if $u\geq 0$,} \\
\, -u, & \text{if $u<0$,}
\end{cases}
\end{equation*}
and the two-parameter-dependent boundary value problem
\begin{equation}\label{eq-hom}
\begin{cases}
\, (\varphi(u'))' + \vartheta \bigl{[} f(t,u) + \alpha v \bigr{]} = 0,\\
\, \mathfrak{B}(u)=0,
\end{cases}
\end{equation}
where $\vartheta\in\mathopen{[}0,1\mathclose{]}$ and $\alpha\geq0$. From the definition of $f(t,u)$, via a maximum principle argument (cf.~\cite[Appendix~A]{BoFe-PP}), we deduce that every solution of \eqref{eq-hom} is non-negative.

As a second step, we reduce problem \eqref{eq-hom} to an equivalent fixed point problem in a Banach space $X$, namely to a problem of the form
\begin{equation}\label{eq-fix}
w = \Psi_{\vartheta,\alpha} w, \quad \text{$w\in X$.}
\end{equation}
The definitions of $X$ and of the operator $\Psi_{\vartheta,\alpha}\colon X \to X$ depend on the boundary conditions. More precisely, in the periodic setting, writing the differential equation in \eqref{eq-hom} as a system of two first-order equations and following the arguments in \cite[Section~2]{BoFe-PP}, we can define $\Psi_{\vartheta,\alpha}$ in $X=\mathcal{C}(\mathopen{[}0,T\mathclose{]},\mathbb{R}^{2})$ with
$w = (u,\varphi(u'))$. On the other hand, for Neumann boundary conditions, we can consider the integral operator defined in \cite[Section~2.1]{BoFe-20na} with $X=\mathcal{C}(\mathopen{[}0,T\mathclose{]})$ and $w = u$. We remark that the definitions of the fixed point operator $\Psi_{\vartheta,\alpha}$ are variants of the one introduced in \cite{MaMa-98,Ma-13}.

As a third step, we observe that $\Psi_{\vartheta,\alpha}$ is a completely continuous operator.
Hence, if $\Omega\subseteq  X$ is an open set 
such that $\{w \in \Omega \colon w = \Psi_{\vartheta,\alpha} w \}$ is compact, we can consider the Leray--Schauder topological degree $\mathrm{deg}_{\mathrm{LS}}(\mathrm{Id}_{X}-\Psi_{\vartheta,\alpha},\Omega)$ (notice that, since we are not assuming the boundedness of $\Omega$, we have to rely on the extension of the degree for locally compact operators, see for instance \cite{Nu-85,Nu-93}). 

As described in the second step, in order to find a solution of \eqref{pb-main}, we need to look for fixed points of $\Psi_{1,0}$. Accordingly, thanks to the existence property of the degree, we are going to show that $\mathrm{deg}_{\mathrm{LS}}(\mathrm{Id}_{X}-\Psi_{1,0},\Omega)\neq0$ for an open set $\Omega\subseteq X$ not containing the trivial solution $w\equiv0$.

We consider the sets $\Omega_{d}\subseteq X$ with $d\in\{r,R\}$, where $0<r<R$ are given in hypotheses $(H_{1})$, $(H_{2})$ and $(H_{3})$. Depending on the boundary conditions, we have $\Omega_{d}=B(0,d)\times \mathcal{C}(\mathopen{[}0,T\mathclose{]})\subseteq\mathcal{C}(\mathopen{[}0,T\mathclose{]},\mathbb{R}^{2})$ for periodic boundary conditions and $\Omega_{d}=B(0,d)\subseteq\mathcal{C}(\mathopen{[}0,T\mathclose{]})$ for Neumann boundary conditions, with $B(0,d)$ the open ball centered at the origin and with radius $d$ in the space $\mathcal{C}(\mathopen{[}0,T\mathclose{]})$.

We claim that 
\begin{equation}\label{eq-deg1} 
\lvert \mathrm{deg}_{\mathrm{LS}}(\mathrm{Id}_{X}-\Psi_{1,0},\Omega_{r}) \rvert = 1.
\end{equation}
Let us consider problem \eqref{eq-hom} with $\alpha=0$, where $\vartheta\in\mathopen{[}0,1\mathclose{]}$ is thought as a homotopic parameter. Since solutions of \eqref{eq-hom} are non-negative and thus solve \eqref{eq-lem-deg1}, hypothesis $(H_{1})$ ensures that the degree $\mathrm{deg}_{\mathrm{LS}}(\mathrm{Id}_{X}-\Psi_{\vartheta,0},\Omega_{r})$ is well-defined for every $\vartheta\in\mathopen{]}0,1\mathclose{]}$. If $\vartheta=0$, the reduction property of the degree (cf.~\cite[Appendix~B]{BoFe-PP} and \cite[Appendix~A]{Fe-18book}) gives
\begin{equation*}
\lvert \mathrm{deg}_{\mathrm{LS}}(\mathrm{Id}_{X}-\Psi_{0,0},\Omega_{r}) \lvert = \lvert \mathrm{deg}_{\mathrm{B}}(-f^{\#},\mathopen{]}-r,r\mathclose{[})\lvert,
\end{equation*}
where ``$\mathrm{deg}_{\mathrm{B}}$'' denotes the finite-dimensional Brouwer degree and
\begin{equation*}
f^{\#}(s) =
\begin{cases}
\, \displaystyle g(s) \int_{0}^{T} a(t)\,\mathrm{d}t , & \text{if $s\geq0$,} \\
\, -s, & \text{if $s<0$.}
\end{cases}
\end{equation*} 
By $(a_{\#})$, since $f^{\#}(s) s <0$ for all $s\neq0$, we deduce that
\begin{equation*}
\lvert \mathrm{deg}_{\mathrm{B}}(-f^{\#},\mathopen{]}-r,r\mathclose{[}) \lvert = 1.
\end{equation*}
By the homotopy invariance property of the degree, we have the claim.

We claim that 
\begin{equation}\label{eq-deg0} 
\mathrm{deg}_{\mathrm{LS}}(\mathrm{Id}_{X}-\Psi_{1,0},\Omega_{R}) = 0.
\end{equation}
Let us consider problem \eqref{eq-hom} with $\vartheta=1$, where $\alpha\geq 0$ is thought as a homotopic parameter. Since solutions of \eqref{eq-hom} are non-negative and thus solve \eqref{eq-lem-deg0}, hypothesis $(H_{2})$ ensures that the degree $\mathrm{deg}_{\mathrm{LS}}(\mathrm{Id}_{X}-\Psi_{1,\alpha},\Omega_{R})$ is well-defined for every $\alpha\geq0$. The homotopy invariance property of the degree and hypothesis $(H_{3})$ finally ensure that
\begin{equation*}
\mathrm{deg}_{\mathrm{LS}}(\mathrm{Id}_{X}-\Psi_{1,0},\Omega_{R}) 
= \mathrm{deg}_{\mathrm{LS}}(\mathrm{Id}_{X}-\Psi_{1,\alpha_{0}},\Omega_{R}) = 0.
\end{equation*}
The claim follows.

As a final step, from \eqref{eq-deg1} and \eqref{eq-deg0}, by the additivity property of the degree, we infer that
\begin{equation*} 
\mathrm{deg}_{\mathrm{LS}}(\mathrm{Id}_{X}-\Psi_{1,0},\Omega_{R}\setminus\overline{\Omega_{r}}) \neq 0.
\end{equation*}
Finally, the existence property of the degree ensures the existence of a solution $w\in \Omega_{R}\setminus\overline{\Omega_{r}}$ of \eqref{eq-fix} with $\vartheta=1$ and $\alpha=0$. Therefore, as explained in the second step, we end up with a solution $u(t)$ of the boundary value problem \eqref{eq-hom} with $\vartheta=1$ and $\alpha=0$, satisfying $r < \| u \|_{\infty} < R$; by a maximum principle argument, $u(t)$ is non-negative and hence solves \eqref{pb-main}.
\end{proof}

\subsection{Proof of Theorem~\ref{th-main-ex}}\label{section-3.2}

We are going to apply Lemma~\ref{lem-deg} with $\varphi$ as in \eqref{def-phi}.

\medskip
\noindent
\textbf{Verification of $(H_{1})$.}
We prove that there exists $r>0$ such that, for every $\vartheta\in \mathopen{]}0,1\mathclose{]}$, problem \eqref{eq-lem-deg1} does not have solutions with $0<\|u\|_{\infty} \leq r$.

By contradiction, let $(u_{n}(t))_{n}$ be a sequence of solutions of \eqref{eq-lem-deg1} for $\vartheta=\vartheta_{n}$ satisfying $0<\|u_{n}\|_{\infty}=: r_{n}\to 0$. The functions
\begin{equation*}
v_{n}(t): = \dfrac{u_{n}(t)}{r_{n}}
\end{equation*}
solve
\begin{equation*}
\Biggl{(} \dfrac{v_{n}'}{\sqrt{1-(u_{n}')^{2}}}\Biggr{)}' + \vartheta_{n} a(t) q(u_{n}(t)) v_{n} = 0,  \qquad \mathfrak{B}(v_{n})=0,
\end{equation*}
where $q(u) = g(u)/u$ for $u > 0$ and $q(0) = 0$. Multiplying the above equation by $v_{n}$ and integrating by parts on $\mathopen{[}0,T\mathclose{]}$, we obtain
\begin{equation*}
\int_{0}^{T} (v_{n}'(t))^{2} \,\mathrm{d}t \leq \int_{0}^{T} \dfrac{(v_{n}'(t))^{2}}{\sqrt{1-(u_{n}'(t))^{2}}} \, \mathrm{d}t = \vartheta_{n} \int_{0}^{T} a(t)q(u_{n}(t))(v_{n}(t))^{2} \,\mathrm{d}t.
\end{equation*}
Since $\| v_{n} \|_{\infty} \leq 1$, by using the first condition in $(g_{0})$, we deduce that 
$\| v_{n} \|_{L^2(0,T)} \to 0$. As a consequence, $v_{n}(t) \to 1$ uniformly in $t \in \mathopen{[}0,T\mathclose{]}$. 

On the other hand, an integration of the equation for $u_{n}$ on $\mathopen{[}0,T\mathclose{]}$ yields
\begin{equation*}
0 = \int_{0}^{T} a(t) g(u_{n}(t))\,\mathrm{d}t = \int_{0}^{T} a(t) g(r_{n})\,\mathrm{d}t + \int_{0}^{T} a(t)\bigl{(}g(r_{n} v_{n}(t)) - g(r_{n})\bigr{)}\,\mathrm{d}t,
\end{equation*}
so that, dividing by $g(r_{n}) > 0$,
\begin{equation*}
0 < - \int_{0}^{T} a(t)\,\mathrm{d}t \leq \|a\|_{L^{1}(0,T)} \sup_{t\in \mathopen{[}0,T\mathclose{]}}\biggl{|}\dfrac{g(r_{n} v_{n}(t))}{g(r_{n})} - 1\biggr{|}.
\end{equation*}
Since $v_{n}(t) \to 1$ uniformly, by exploiting the second condition in $(g_{0})$, we conclude that the right-hand side of the above inequality tends to zero, a contradiction. Condition $(H_{1})$ is thus verified and we can fix the constant $r$.

\medskip

In order to verify condition $(H_{2})$, some preliminary arguments are needed.
We first define, for $i=1,\ldots,m$, the function 
\begin{equation*}
\gamma_{i}(\delta) = \min_{t \in \mathopen{[}\sigma_{i}+\delta,\tau_{i}\mathclose{]}}\| a\|_{L^1(t-\delta,t)}, \qquad \delta \in \biggl{[}0,\frac{\tau_{i}-\sigma_{i}}{4}\biggr{]},
\end{equation*}
in such a way that $\gamma_{i}((\tau_{i}-\sigma_{i})/4) = A_{i}$ (cf.~\eqref{def-Ai}). 
An easy argument shows that $\gamma_{i}$ is increasing and continuous. Therefore, from condition $(g_{\mathrm{SE}})$
we infer the existence of $\delta_{i} \in \mathopen{]}0,(\tau_{i}-\sigma_{i})/4 \mathclose{[}$ such that
\begin{equation*}
\liminf_{u \to +\infty}\frac{g(u)}{G(u)} > \dfrac{\|a^{-}\|_{L^{\infty}(0,T)}}{\gamma_{i}(\delta_{i})},
\end{equation*}
for every $i =1,\ldots,m$.
Let us now fix 
\begin{equation}\label{delta-eps}
\varepsilon \in \biggl{]}0,\dfrac{\tau_{i}-\sigma_{i} - 4\delta_{i}}{\tau_{i}-\sigma_{i} - 2\delta_{i}}\biggr{[}
\end{equation}
and define
\begin{equation*}
\beta_{i} = \varepsilon \biggl{(} \delta_{i} - \frac{\tau_{i}-\sigma_{i}}{2}\biggr{)} + \frac{\tau_{i}-\sigma_{i}}{2} - 2\delta_{i}.
\end{equation*}
Notice that, in view of \eqref{delta-eps}, it holds that $\beta_{i} > 0$.
Then, we use once more condition $(g_{\mathrm{SE}})$ to chose $R^{*} > 0$ such that
the estimates
\begin{equation}\label{eq-R1}
- \gamma_{i}(\delta_{i}) \min_{s\in \mathopen{[}\rho-\delta_{i},\rho\mathclose{]}} g(s) \leq \varphi(-1+\varepsilon)
\end{equation}
and
\begin{equation}\label{eq-R2}
\dfrac{g(\rho-\delta_{i})}{G((\rho-\delta_{i}) - \beta_{i})} > \dfrac{\|a^{-}\|_{L^{\infty}(0,T)}}{\gamma_{i}(\delta_{i})} - \dfrac{\varphi(-1+\varepsilon)}{\gamma_{i}(\delta_{i}) G((\rho-\delta_{i}) -\beta_{i})}
\end{equation}
hold for all $\rho\geq R^{*}$ and $i=1,\ldots,m$. 
Notice that the above choice is possible since $g(u) \to +\infty$ for $u \to +\infty$ (by \eqref{g-infty}),
$G(u)$ is monotone increasing, and $G(u) \to +\infty$ for $u \to +\infty$ (by \eqref{G-infty}).

Finally, let $v(t)$ be the indicator function of the set $\bigcup_{i=1}^{m} I^{+}_{i}$. 
We are going to verify conditions $(H_{2})$ and $(H_{3})$ for
\begin{equation*}
R := \max \bigl{\{} R^{*}, \hat{R}\bigr{\}} + 2 \max_{i=1,\ldots,m}\delta_{i} + T,
\end{equation*}
where $\hat{R}$ is introduced in hypothesis $(g_{\nearrow})$.

\medskip
\noindent
\textbf{Verification of $(H_{2})$.}
For this verification we recall the convention on the nodal behavior of $a(t)$ made in Remark~\ref{rem-nodal} for the periodic boundary conditions.

By contradiction, we assume that $u(t)$ is a solution of \eqref{eq-lem-deg0} such that $\|u\|_{\infty} = R$.
Since $u(t)$ is convex in the negativity intervals $I_{i}^{-}$, we can assume that the maximum is reached in a point $\hat{t}_i\in I^{+}_{i}=\mathopen{[}\sigma_{i},\tau_{i}\mathclose{]}$, for some $i=1,\ldots,m$, and that $u'(\hat{t}_i)=0$. Notice that, in case of Neumann boundary conditions, the possibility that the maximum is reached for $t = 0$ (respectively, $t = T$) is excluded if $a(t) \leq 0$ in a right neighborhood of $t = 0$ (respectively, left neighborhood of $t = T$).

Clearly at least one of the following situations occurs
\begin{equation}\label{2cases}
\hat{t}_i\in\biggl{[}\sigma_{i},\frac{\sigma_{i}+\tau_{i}}{2}\biggr{]}
\qquad \text{ or } \qquad
\hat{t}_i\in\biggl{[}\frac{\sigma_{i}+\tau_{i}}{2},\tau_{i}\biggr{]}.
\end{equation}
Our goal is to show that in the first case we have
\begin{equation}\label{eq-case1}
u'(t)\leq -1+\varepsilon, \quad \text{for every $t \in J^{+}$,}
\end{equation}
while in the second one we have
\begin{equation}\label{eq-case2}
u'(t)\geq 1-\varepsilon, \quad \text{for every $t \in J^{-}$},
\end{equation}
where the intervals $J^{\pm}$ are defined by 
\begin{equation*}
J^{+} = \mathopen{[}\hat{t}_i+\delta_{i},\hat{t}_i+\delta_{i}+T\mathclose{]},
\qquad
J^{-}= \mathopen{[}\hat{t}_i-\delta_{i}-T,\hat{t}_i-\delta_{i}\mathclose{]},
\end{equation*}
in the case of periodic boundary conditions (we agree that the function $u(t)$ is extended to the whole real line by $T$-periodicity)
and by
\begin{equation*}
J^{+} = \mathopen{[}\hat{t}_i+\delta_{i},T\mathclose{]},
\qquad
J^{-} = \mathopen{[}0,\hat{t}_i-\delta_{i}\mathclose{]},
\end{equation*}
in the case of Neumann boundary conditions. Hence, both for periodic and Neumann boundary conditions a contradiction is reached.
For further convenience, let us observe that, due to $\| u' \|_{\infty} < 1$ and the fact that the distances of 
$\hat{t}_i$ from $ \sup J^+$ or $\inf J^-$ are less than or equal to $\max_i\delta_{i} + T$ , it holds that
\begin{equation}\label{eq-fondamentale}
u(t) \geq \max \bigl{\{} R^{*}, \hat{R}\bigr{\}} + \max_{i=1,\ldots,m}\delta_{i}, \quad \text{for every $t \in J^{-} \cup J^{+}$.}
\end{equation}

We provide the detailed argument in the first case of \eqref{2cases}. The other case can be treated in a symmetric way.

Firstly, we prove that
\begin{equation}\label{eq-hat-t}
u'(\hat{t}_i+\delta_{i})\leq -1+\varepsilon.
\end{equation}
Indeed, notice that, since $|u'(t)| < 1$ for every $t \in \mathopen{[}0,T\mathclose{]}$,
then $u(t) \geq R - \delta_{i}$ for $t \in \mathopen{[}\hat{t}_i,\hat{t}_i+\delta_{i}\mathclose{]}$.
An integration of the equation in such an interval then leads to
\begin{align}
\varphi(u'(\hat{t}_i+\delta_{i})) &= \varphi(u'(\hat{t}_i)) + \int_{\hat{t}_i}^{\hat{t}_i+\delta_{i}} \bigl{[}\varphi(u'(t))\bigr{]}' \,\mathrm{d}t 
\\ &= -  \int_{\hat{t}_i}^{\hat{t}_i+\delta_{i}} \bigl{(}a(t)g(u(t))+\alpha v(t)\bigr{)} \,\mathrm{d}t 
\label{eq-for-iteration}
\\ &\leq -\|a\|_{L^{1}(\hat{t}_i,\hat{t}_i+\delta_{i})} \min_{t\in \mathopen{[}\hat{t}_i,\hat{t}_i+\delta_{i}\mathclose{]}} g(u(t))
\\ &\leq - \gamma_{i}(\delta_{i}) \min_{s\in \mathopen{[}R-\delta_{i},R\mathclose{]}} g(s)
\\ &\leq \varphi(-1+\varepsilon),
\end{align}
where the last inequality follows from \eqref{eq-R1}. Therefore, \eqref{eq-hat-t} follows.

From this estimate, together with the fact that $t\mapsto\varphi(u'(t))$ is monotone decreasing in $I^{+}_{i}$, we have
\begin{equation}\label{eq-hat-t-2}
u'(t)\leq-1+\varepsilon, \quad \text{for all $t\in\mathopen{[}\hat{t}_i+\delta_{i},\tau_{i}\mathclose{]}$.}
\end{equation}

The second step consists in finding upper bounds for both $u(\tau_{i})$ and $\varphi(u'(\tau_{i}))$.
The one for $u(\tau_{i})$ is easily achieved: indeed, from \eqref{eq-hat-t-2} and recalling that $u'(t)\leq0$ for every $t\in\mathopen{[}\hat{t}_i,\hat{t}_i+\delta_{i}\mathclose{]}$, we immediately find that
\begin{align}
u(\tau_{i}) &= u(\hat{t}_i) + \int_{\hat{t}_i}^{\tau_{i}} u'(t) \,\mathrm{d}t  = R + \int_{\hat{t}_i}^{\hat{t}_i+\delta_{i}} u'(t) \,\mathrm{d}t + \int_{\hat{t}_i+\delta_{i}}^{\tau_{i}} u'(t) \,\mathrm{d}t
\\ &\leq R + (-1+\varepsilon) (\tau_{i}-\hat{t}_i-\delta_{i}).
\label{estimatetau1}
\end{align}
As for the estimate on $\varphi(u'(\tau_{i}))$, we proceed as in the proof of \eqref{eq-hat-t}
to obtain
\begin{equation}\label{estimatetau2}
\varphi(u'(\tau_{i})) \leq  \varphi(u'(\hat{t}_i+\delta_{i})) \leq - \gamma_{i}(\delta_{i}) \min_{s\in \mathopen{[}R-\delta_{i},R\mathclose{]}} g(s) \leq - \gamma_{i}(\delta_{i}) \, g(R-\delta_{i}),
\end{equation}
where the last inequality follows from $(g_{\nearrow})$, taking into account that $R-\delta_i \geq \hat{R}$.

Third, let us consider the subsequent interval $I_{i}^{-}=\mathopen{[}\tau_{i},\sigma_{i+1}\mathclose{]}$
(notice that, in the case of Neumann boundary conditions, if there is no such an interval the proof is already completed; 
in the periodic case, instead, this possibility is excluded in view of Remark~\ref{rem-nodal}). We claim that
\begin{equation}\label{eq-hat-t-3}
u'(t)\leq-1+\varepsilon, \quad \text{for all $t\in\mathopen{[}\tau_{i},\sigma_{i+1}\mathclose{]}$.}
\end{equation}
Let $\mathopen{[}\tau_{i},t^{*}\mathclose{]}$ be the maximal interval in $\mathopen{[}\tau_{i},\sigma_{i+1}\mathclose{]}$ such that $u'(t)\leq-1+\varepsilon$ for all $t\in\mathopen{[}\tau_{i},t^{*}\mathclose{]}$.
Then, using \eqref{estimatetau1} we find
\begin{align*}
u(t) &= u(\tau_{i}) + \int_{\tau_{i}}^{t} u'(\xi) \,\mathrm{d}\xi 
\\ &\leq R + (-1+\varepsilon) (\tau_{i}-\hat{t}_i-\delta_{i}) + (-1+\varepsilon) (t-\tau_{i})
\\ &= R + (-1+\varepsilon) (t-\hat{t}_i-\delta_{i}),
\end{align*}
for all $t\in\mathopen{[}\tau_{i},t^{*}\mathclose{]}$. 
By contradiction, suppose that $t^{*}<\sigma_{i+1}$.

Then, integrating the equation (recall that $v \equiv 0$ on $I^{-}_{i}$) and using \eqref{estimatetau2} we obtain
\begin{align*}
\varphi(-1+\varepsilon)
&=\varphi(u'(t^{*}))
= \varphi(u'(\tau_{i})) + \int_{\tau_{i}}^{t^{*}} a(t)g(u(t)) \,\mathrm{d}t 
\\ &\leq \varphi(u'(\tau_{i})) + \|a^{-}\|_{L^{\infty}(0,T)}\int_{\tau_{i}}^{t^{*}} g(R +(-1+\varepsilon)(t-\hat{t}_i-\delta_{i})) \,\mathrm{d}t 
\\ &\leq -\gamma_{i}(\delta_{i}) \, g(R -\delta_{i}) + \|a^{-}\|_{L^{\infty}(0,T)} \bigl{[} G(R + (-1+\varepsilon) (\tau_{i}-\hat{t}_i-\delta_{i}))
\\ &\hspace{120pt} - G(R + (-1+\varepsilon) (t^{*}-\hat{t}_i-\delta_{i}) ) \bigr{]}
\\ &\leq -\gamma_{i}(\delta_{i}) \, g(R -\delta_{i}) + \|a^{-}\|_{L^{\infty}(0,T)} G(R + (-1+\varepsilon) (\tau_{i}-\hat{t}_i-\delta_{i}))
\\ &\leq -\gamma_{i}(\delta_{i}) \, g(R -\delta_{i})  + \|a^{-}\|_{L^{\infty}(0,T)} G((R -\delta_{i}) - \beta_{i}),
\end{align*}
where the last inequality comes from the facts that $G(u)$ is increasing and $\tau_{i}-\hat{t}_i\geq (\tau_{i}-\sigma_{i})/2$.
Then, 
\begin{equation}\label{eq-3.10}
\dfrac{g(R-\delta_{i})}{G((R-\delta_{i}) - \beta_{i})} \leq \dfrac{\|a^{-}\|_{L^{\infty}(0,T)}}{\gamma_{i}(\delta_{i})} - 
\dfrac{\varphi(-1+\varepsilon)}{\gamma_{i}(\delta_{i}) G((R-\delta_{i}) - \beta_{i})},
\end{equation}
a contradiction with respect to \eqref{eq-R2}. Then, $t^* = \sigma_{i+1}$
and \eqref{eq-hat-t-3} is proved.

Summing up, by \eqref{eq-hat-t-2} and \eqref{eq-hat-t-3}, we have that
\begin{equation*}
u'(t)\leq-1+\varepsilon, \quad \text{for all $t\in\mathopen{[}\hat{t}_i,\sigma_{i+1}\mathclose{]}$.}
\end{equation*}
If $\sigma_{i+1}=T$ and Neumann boundary conditions are taken into account, the proof is concluded.
Otherwise, we first observe that, by convexity,
\begin{equation*}
u'(t)\leq-1+\varepsilon, \quad \text{for all $t\in\mathopen{[}\sigma_{i+1},\tau_{i+1}\mathclose{]}$},
\end{equation*}
(if $\sigma_{i+1}=T$ and periodic boundary conditions are considered, both the function $u(t)$ and the 
weight $a(t)$ are extended by $T$-periodicity).
If $\tau_{i+1} = T$ and Neumann boundary conditions are taken into account, the proof is concluded.
Otherwise, we claim that 
\begin{equation}\label{estimatei+1}
u'(t)\leq-1+\varepsilon, \quad \text{for all $t\in\mathopen{[}\tau_{i+1},\sigma_{i+2}\mathclose{]}$}.
\end{equation}
This can be proved in a similar manner as before. More precisely, we first prove that
\begin{equation}\label{estimatetau1bis}
u(\tau_{i+1}) \leq R_{i+1} + (-1+\varepsilon) (\tau_{i+1}-\hat{t}_{i+1}-\delta_{i+1})
\end{equation}
and
\begin{equation}\label{estimatetau2bis}
\varphi(u'(\tau_{i+1}))  \leq -\gamma_{i+1}(\delta_{i+1}) \, g(R_{i+1}-\delta_{i+1}),
\end{equation}
where 
\begin{equation*}
\hat{t}_{i+1} = \sigma_{i+1} \quad \text{ and } \quad R_{i+1} = u(\hat{t}_{i+1}).
\end{equation*}
The above estimates are analogous to \eqref{estimatetau1} and \eqref{estimatetau2}, respectively, and can be proved with the very same arguments: indeed, $\hat{t}_{i+1}$ is the maximum point for $u(t)$ in the interval $I^+_{i+1}$ (in formula \eqref{eq-for-iteration}, the equality is replaced by an inequality, since $u'(\hat{t}_{i+1}) \leq 0$) and, as a consequence of \eqref{eq-fondamentale},
$R_{i+1} \geq \max\{R^*,\hat{R}\} + \delta_{i+1}$.
Using \eqref{estimatetau1bis} and \eqref{estimatetau2bis},
the proof of \eqref{estimatei+1} can be done with the same contradiction argument as before: more precisely, we achieve estimate \eqref{eq-3.10} with $R_{i+1}$ in place of $R$ and $i+1$ in place of $i$, thus contradicting \eqref{eq-R2} since $R_{i+1} \geq R^*$.

Arguing in an iterative way \eqref{eq-case1} can be established and the proof of $(H_{2})$ is thus completed.

\medskip
\noindent
\textbf{Verification of $(H_{3})$.}
Let
\begin{equation*}
\alpha_{0} > \dfrac{\|a\|_{L^{1}(0,T)} \max_{s\in\mathopen{[}0,R\mathclose{]}} g(s)}{\|v\|_{L^{1}(0,T)}}.
\end{equation*}
Looking for solution of \eqref{eq-lem-deg0} with $\|u\|_{\infty}\leq R$, we integrate equation \eqref{eq-lem-deg0} on $\mathopen{[}0,T\mathclose{]}$ and pass to the absolute value, thus obtaining
\begin{equation*}
\alpha \| v \|_{L^{1}(0,T)} \leq \| a \|_{L^{1}(0,T)} \max_{s \in \mathopen{[}0,R\mathclose{]}} g(s).
\end{equation*}
It is clear that for $\alpha\geq\alpha_{0}$ such solutions do not exist.

\subsubsection*{Conclusion of the proof}
By applying Lemma~\ref{lem-deg} we obtain a solution of problem \eqref{eq-main-phi}. A direct application of a strong maximum principle (see, for instance, \cite[Theorem~A.2]{BoFe-PP}) ensures that the solution is positive. The proof of Theorem~\ref{th-main-ex} is thus completed.
\hfill\qed

\begin{remark}\label{rem-3.1}
The arguments used in the verification of hypothesis $(H_{2})$ show, in particular, that a quite unexpected property of the differential equation \eqref{eq-main-phi} holds true: roughly speaking, its ``large solutions'' 
have a $\wedge$-shaped graph, with slope approaching the values $\pm 1$ (indeed, \eqref{eq-case1} and \eqref{eq-case2} can be obtained for arbitrarily small values of $\varepsilon > 0$, up to choosing the maximum value sufficiently large).
This is the key point of the proof, and it is of course a consequence of assumption $(g_{\mathrm{SE}})$: we refer to Figure~\ref{fig-03} for a graphical explanation and more comments about this. It is interesting to point out that, for the semilinear equation \eqref{eq-semi} with superlinear growth of $g(u)$ at infinity, condition $(H_{2})$ is also true (compare with condition 
$(H_{R})$ in \cite[Theorem~2.1]{FeZa-15ade}). However, the proof of its validity relies on very different arguments, namely, a Sturm comparison technique involving only the equation on the positivity intervals (cf.~\cite[Lemma~6.2]{FeZa-15ade}). On the contrary, here the
crucial estimate has to be achieved in the negativity intervals (compare again with Figure~\ref{fig-03}). Hence, 
even if Theorem~\ref{th-main-ex} can be interpreted as a Minkowski-curvature analogue of more classical results for semilinear indefinite equations with superlinear nonlinearities, the dynamics of the corresponding differential equations is drastically different. 
\hfill$\lhd$
\end{remark}

\begin{figure}[htb]
\centering
\begin{tikzpicture}[scale=1]
\begin{axis}[
  tick label style={font=\scriptsize},
  axis y line=middle, 
  axis x line=middle,
  xtick={-2,-1,0,1,2},
  ytick={0,1},
  xticklabels={},
  yticklabels={$0$},
  xlabel={\small $t$},
  ylabel={\small $u(t)$},
every axis x label/.style={
    at={(ticklabel* cs:1.0)},
    anchor=west,
},
every axis y label/.style={
    at={(ticklabel* cs:1.0)},
    anchor=south,
},
  width=6.7cm,
  height=6cm,
  xmin=-2.6,
  xmax=2.7,
  ymin=-0.7,
  ymax=2.7]
\addplot [color=BF-blue!55!BF-green,line width=0.7pt,smooth] coordinates {(-2.,0.382545) (-1.9,0.360217) (-1.8,0.343123) (-1.7,0.330876) (-1.6,0.323198) (-1.5,0.319912) (-1.4,0.320945) (-1.3,0.326319) (-1.2,0.336157) (-1.1,0.350684) (-1.,0.37023) (-0.9,0.39171) (-0.8,0.411637) (-0.7,0.429818) (-0.6,0.44606) (-0.5,0.460178) (-0.4,0.471998) (-0.3,0.481366) (-0.2,0.488156) (-0.1,0.49227) (0.,0.493648) (0.1,0.49227) (0.2,0.488156) (0.3,0.481366) (0.4,0.471998) (0.5,0.460178) (0.6,0.44606) (0.7,0.429818) (0.8,0.411637) (0.9,0.39171) (1.,0.37023) (1.1,0.350684) (1.2,0.336157) (1.3,0.326319) (1.4,0.320945) (1.5,0.319912) (1.6,0.323198) (1.7,0.330876) (1.8,0.343123) (1.9,0.360217) (2.,0.382545)};
\addplot [color=BF-red,line width=0.7pt,smooth] coordinates {(-2.,0.267815) (-1.9,0.269305) (-1.8,0.273804) (-1.7,0.281406) (-1.6,0.292269) (-1.5,0.306621) (-1.4,0.324769) (-1.3,0.347106) (-1.2,0.374115) (-1.1,0.406371) (-1.,0.444529) (-0.9,0.484977) (-0.8,0.523365) (-0.7,0.559229) (-0.6,0.59205) (-0.5,0.621265) (-0.4,0.646283) (-0.3,0.666517) (-0.2,0.681426) (-0.1,0.690567) (0.,0.693648) (0.1,0.690567) (0.2,0.681426) (0.3,0.666517) (0.4,0.646283) (0.5,0.621265) (0.6,0.59205) (0.7,0.559229) (0.8,0.523365) (0.9,0.484977) (1.,0.444529) (1.1,0.406371) (1.2,0.374115) (1.3,0.347106) (1.4,0.324769) (1.5,0.306621) (1.6,0.292269) (1.7,0.281406) (1.8,0.273804) (1.9,0.269305) (2.,0.267815)};
\addplot [color=BF-blue!60!BF-green,line width=0.7pt,smooth] coordinates {(-2.,0.00570904) (-1.9,0.0485909) (-1.8,0.0915515) (-1.7,0.134769) (-1.6,0.178531) (-1.5,0.223239) (-1.4,0.269403) (-1.3,0.317642) (-1.2,0.368673) (-1.1,0.423281) (-1.,0.482275) (-0.9,0.543004) (-0.8,0.601961) (-0.7,0.658537) (-0.6,0.711941) (-0.5,0.761167) (-0.4,0.804946) (-0.3,0.841747) (-0.2,0.869852) (-0.1,0.887578) (0.,0.893648) (0.1,0.887578) (0.2,0.869852) (0.3,0.841747) (0.4,0.804946) (0.5,0.761167) (0.6,0.711941) (0.7,0.658537) (0.8,0.601961) (0.9,0.543004) (1.,0.482275) (1.1,0.423281) (1.2,0.368673) (1.3,0.317642) (1.4,0.269403) (1.5,0.223239) (1.6,0.178531) (1.7,0.134769) (1.8,0.0915515) (1.9,0.0485909) (2.,0.00570904)};
\addplot [color=BF-blue!65!BF-green,line width=0.7pt,smooth] coordinates {(-2.,-0.211618) (-1.9,-0.140983) (-1.8,-0.0703486) (-1.7,0.000286047) (-1.6,0.0709266) (-1.5,0.141651) (-1.4,0.212671) (-1.3,0.284342) (-1.2,0.357138) (-1.1,0.431631) (-1.,0.508424) (-0.9,0.586152) (-0.8,0.66282) (-0.7,0.737956) (-0.6,0.810881) (-0.5,0.880588) (-0.4,0.945548) (-0.3,1.0034) (-0.2,1.0506) (-0.1,1.08233) (0.,1.09365) (0.1,1.08233) (0.2,1.0506) (0.3,1.0034) (0.4,0.945548) (0.5,0.880588) (0.6,0.810881) (0.7,0.737956) (0.8,0.66282) (0.9,0.586152) (1.,0.508424) (1.1,0.431631) (1.2,0.357138) (1.3,0.284342) (1.4,0.212671) (1.5,0.141651) (1.6,0.0709266) (1.7,0.000286047) (1.8,-0.0703486) (1.9,-0.140983) (2.,-0.211618)};
\addplot [color=BF-blue!70!BF-green,line width=0.7pt,smooth] coordinates {(-2.,-0.314131) (-1.9,-0.227818) (-1.8,-0.141504) (-1.7,-0.0551898) (-1.6,0.0311241) (-1.5,0.117449) (-1.4,0.203851) (-1.3,0.290475) (-1.2,0.377534) (-1.1,0.465302) (-1.,0.554078) (-0.9,0.643269) (-0.8,0.731967) (-0.7,0.819929) (-0.6,0.906783) (-0.5,0.991929) (-0.4,1.07433) (-0.3,1.15204) (-0.2,1.2211) (-0.1,1.27313) (0.,1.29365) (0.1,1.27313) (0.2,1.2211) (0.3,1.15204) (0.4,1.07433) (0.5,0.991929) (0.6,0.906783) (0.7,0.819929) (0.8,0.731967) (0.9,0.643269) (1.,0.554078) (1.1,0.465302) (1.2,0.377534) (1.3,0.290475) (1.4,0.203851) (1.5,0.117449) (1.6,0.0311241) (1.7,-0.0551898) (1.8,-0.141504) (1.9,-0.227818) (2.,-0.314131)};
\addplot [color=BF-blue!75!BF-green,line width=0.7pt,smooth] coordinates {(-2.,-0.306183) (-1.9,-0.211843) (-1.8,-0.117504) (-1.7,-0.023165) (-1.6,0.0711745) (-1.5,0.165524) (-1.4,0.259915) (-1.3,0.354409) (-1.2,0.449087) (-1.1,0.544055) (-1.,0.639423) (-0.9,0.734956) (-0.8,0.830295) (-0.7,0.925348) (-0.6,1.01996) (-0.5,1.11389) (-0.4,1.20668) (-0.3,1.29738) (-0.2,1.38366) (-0.1,1.45804) (0.,1.49365) (0.1,1.45804) (0.2,1.38366) (0.3,1.29738) (0.4,1.20668) (0.5,1.11389) (0.6,1.01996) (0.7,0.925348) (0.8,0.830295) (0.9,0.734956) (1.,0.639423) (1.1,0.544055) (1.2,0.449087) (1.3,0.354409) (1.4,0.259915) (1.5,0.165524) (1.6,0.0711745) (1.7,-0.023165) (1.8,-0.117504) (1.9,-0.211843) (2.,-0.306183)};
\addplot [color=BF-blue!80!BF-green,line width=0.7pt,smooth] coordinates {(-2.,-0.213756) (-1.9,-0.115784) (-1.8,-0.0178114) (-1.7,0.0801613) (-1.6,0.178137) (-1.5,0.276123) (-1.4,0.374134) (-1.3,0.472193) (-1.2,0.570325) (-1.1,0.668563) (-1.,0.76694) (-0.9,0.865373) (-0.8,0.963743) (-0.7,1.06202) (-0.6,1.16015) (-0.5,1.25807) (-0.4,1.3556) (-0.3,1.45245) (-0.2,1.54773) (-0.1,1.63796) (0.,1.69365) (0.1,1.63796) (0.2,1.54773) (0.3,1.45245) (0.4,1.3556) (0.5,1.25807) (0.6,1.16015) (0.7,1.06202) (0.8,0.963743) (0.9,0.865373) (1.,0.76694) (1.1,0.668563) (1.2,0.570325) (1.3,0.472193) (1.4,0.374134) (1.5,0.276123) (1.6,0.178137) (1.7,0.0801613) (1.8,-0.0178114) (1.9,-0.115784) (2.,-0.213756)};
\addplot [color=BF-blue!85!BF-green,line width=0.7pt,smooth] coordinates {(-2.,-0.0670807) (-1.9,0.0323081) (-1.8,0.131697) (-1.7,0.231087) (-1.6,0.33048) (-1.5,0.429879) (-1.4,0.529289) (-1.3,0.628715) (-1.2,0.728165) (-1.1,0.827645) (-1.,0.927165) (-0.9,1.0267) (-0.8,1.12622) (-0.7,1.22571) (-0.6,1.32517) (-0.5,1.42456) (-0.4,1.52386) (-0.3,1.62297) (-0.2,1.72165) (-0.1,1.81888) (0.,1.89365) (0.1,1.81888) (0.2,1.72165) (0.3,1.62297) (0.4,1.52386) (0.5,1.42456) (0.6,1.32517) (0.7,1.22571) (0.8,1.12622) (0.9,1.0267) (1.,0.927165) (1.1,0.827645) (1.2,0.728165) (1.3,0.628715) (1.4,0.529289) (1.5,0.429879) (1.6,0.33048) (1.7,0.231087) (1.8,0.131697) (1.9,0.0323081) (2.,-0.0670807)};
\addplot [color=BF-blue!90!BF-green,line width=0.7pt,smooth] coordinates {(-2.,0.109217) (-1.9,0.209065) (-1.8,0.308914) (-1.7,0.408763) (-1.6,0.508614) (-1.5,0.608467) (-1.4,0.708322) (-1.3,0.808181) (-1.2,0.908046) (-1.1,1.00792) (-1.,1.1078) (-0.9,1.20768) (-0.8,1.30756) (-0.7,1.40744) (-0.6,1.5073) (-0.5,1.60716) (-0.4,1.70699) (-0.3,1.80678) (-0.2,1.90647) (-0.1,2.00584) (0.,2.09365) (0.1,2.00584) (0.2,1.90647) (0.3,1.80678) (0.4,1.70699) (0.5,1.60716) (0.6,1.5073) (0.7,1.40744) (0.8,1.30756) (0.9,1.20768) (1.,1.1078) (1.1,1.00792) (1.2,0.908046) (1.3,0.808181) (1.4,0.708322) (1.5,0.608467) (1.6,0.508614) (1.7,0.408763) (1.8,0.308914) (1.9,0.209065) (2.,0.109217)};
\addplot [color=BF-blue!95!BF-green,line width=0.7pt,smooth] coordinates {(-2.,0.299519) (-1.9,0.399489) (-1.8,0.499458) (-1.7,0.599428) (-1.6,0.699397) (-1.5,0.799368) (-1.4,0.899338) (-1.3,0.99931) (-1.2,1.09928) (-1.1,1.19926) (-1.,1.29923) (-0.9,1.39921) (-0.8,1.49918) (-0.7,1.59916) (-0.6,1.69913) (-0.5,1.7991) (-0.4,1.89906) (-0.3,1.99902) (-0.2,2.09896) (-0.1,2.19885) (0.,2.29365) (0.1,2.19885) (0.2,2.09896) (0.3,1.99902) (0.4,1.89906) (0.5,1.7991) (0.6,1.69913) (0.7,1.59916) (0.8,1.49918) (0.9,1.39921) (1.,1.29923) (1.1,1.19926) (1.2,1.09928) (1.3,0.99931) (1.4,0.899338) (1.5,0.799368) (1.6,0.699397) (1.7,0.599428) (1.8,0.499458) (1.9,0.399489) (2.,0.299519)};
\addplot [color=BF-blue!100!BF-green,line width=0.7pt,smooth] coordinates {(-2.,0.495765) (-1.9,0.59576) (-1.8,0.695755) (-1.7,0.79575) (-1.6,0.895745) (-1.5,0.99574) (-1.4,1.09573) (-1.3,1.19573) (-1.2,1.29573) (-1.1,1.39572) (-1.,1.49572) (-0.9,1.59571) (-0.8,1.69571) (-0.7,1.7957) (-0.6,1.8957) (-0.5,1.99569) (-0.4,2.09569) (-0.3,2.19568) (-0.2,2.29567) (-0.1,2.39565) (0.,2.49365) (0.1,2.39565) (0.2,2.29567) (0.3,2.19568) (0.4,2.09569) (0.5,1.99569) (0.6,1.8957) (0.7,1.7957) (0.8,1.69571) (0.9,1.59571) (1.,1.49572) (1.1,1.39572) (1.2,1.29573) (1.3,1.19573) (1.4,1.09573) (1.5,0.99574) (1.6,0.895745) (1.7,0.79575) (1.8,0.695755) (1.9,0.59576) (2.,0.495765)};
\addplot [color=gray,line width=0.4pt,dashed] coordinates {(-2,-0.4) (-2,0.7)};
\addplot [color=gray,line width=0.4pt,dashed] coordinates {(2,-0.4) (2,0.7)};
\node[label={270:{\scriptsize{-2}}}] at (axis cs:-2.2,0.04) {};
\node[label={270:{\scriptsize{0}}}] at (axis cs:-0.15,0.04) {};
\node[label={270:{\scriptsize{2}}}] at (axis cs:2.15,0.04) {};
\end{axis}
\end{tikzpicture}
\quad\quad\quad
\begin{tikzpicture}[scale=1]
\begin{axis}[
  tick label style={font=\scriptsize},
  axis y line=middle, 
  axis x line=middle,
  xtick={-2,-1,0,1,2},
  ytick={0,1},
  xticklabels={},
  yticklabels={$0$},
  xlabel={\small $t$},
  ylabel={\small $u(t)$},
every axis x label/.style={
    at={(ticklabel* cs:1.0)},
    anchor=west,
},
every axis y label/.style={
    at={(ticklabel* cs:1.0)},
    anchor=south,
},
  width=6.7cm,
  height=6cm,
  xmin=-2.6,
  xmax=2.7,
  ymin=-0.5,
  ymax=3.8]
\addplot [color=BF-blue!40!BF-green,line width=0.7pt,smooth] coordinates {(-2.,0.422938) (-1.9,0.407215) (-1.8,0.397981) (-1.7,0.395058) (-1.6,0.398388) (-1.5,0.408036) (-1.4,0.42419) (-1.3,0.447141) (-1.2,0.477267) (-1.1,0.514982) (-1.,0.560656) (-0.9,0.609301) (-0.8,0.655409) (-0.7,0.698435) (-0.6,0.737771) (-0.5,0.772754) (-0.4,0.802691) (-0.3,0.82689) (-0.2,0.844713) (-0.1,0.855638) (0.,0.85932) (0.1,0.855638) (0.2,0.844713) (0.3,0.82689) (0.4,0.802691) (0.5,0.772754) (0.6,0.737771) (0.7,0.698435) (0.8,0.655409) (0.9,0.609301) (1.,0.560656) (1.1,0.514982) (1.2,0.477267) (1.3,0.447141) (1.4,0.42419) (1.5,0.408036) (1.6,0.398388) (1.7,0.395058) (1.8,0.397981) (1.9,0.407215) (2.,0.422938)};
\addplot [color=BF-red,line width=0.7pt,smooth] coordinates {(-2.,0.355736) (-1.9,0.358271) (-1.8,0.36593) (-1.7,0.378869) (-1.6,0.397348) (-1.5,0.421721) (-1.4,0.452424) (-1.3,0.489933) (-1.2,0.534711) (-1.1,0.587112) (-1.,0.647273) (-0.9,0.710244) (-0.8,0.770756) (-0.7,0.828135) (-0.6,0.881557) (-0.5,0.930032) (-0.4,0.972401) (-0.3,1.00737) (-0.2,1.03362) (-0.1,1.04994) (0.,1.05548) (0.1,1.04994) (0.2,1.03362) (0.3,1.00737) (0.4,0.972401) (0.5,0.930032) (0.6,0.881557) (0.7,0.828135) (0.8,0.770756) (0.9,0.710244) (1.,0.647273) (1.1,0.587112) (1.2,0.534711) (1.3,0.489933) (1.4,0.452424) (1.5,0.421721) (1.6,0.397348) (1.7,0.378869) (1.8,0.36593) (1.9,0.358271) (2.,0.355736)};
\addplot [color=BF-blue!45!BF-green,line width=0.7pt,smooth] coordinates {(-2.,0.314499) (-1.9,0.332827) (-1.8,0.355321) (-1.7,0.382415) (-1.6,0.414613) (-1.5,0.452469) (-1.4,0.496546) (-1.3,0.547347) (-1.2,0.605223) (-1.1,0.670259) (-1.,0.742196) (-0.9,0.816537) (-0.8,0.888776) (-0.7,0.958235) (-0.6,1.02402) (-0.5,1.08496) (-0.4,1.1395) (-0.3,1.1857) (-0.2,1.22127) (-0.1,1.24387) (0.,1.25164) (0.1,1.24387) (0.2,1.22127) (0.3,1.1857) (0.4,1.1395) (0.5,1.08496) (0.6,1.02402) (0.7,0.958235) (0.8,0.888776) (0.9,0.816537) (1.,0.742196) (1.1,0.670259) (1.2,0.605223) (1.3,0.547347) (1.4,0.496546) (1.5,0.452469) (1.6,0.414613) (1.7,0.382415) (1.8,0.355321) (1.9,0.332827) (2.,0.314499)};
\addplot [color=BF-blue!50!BF-green,line width=0.7pt,smooth] coordinates {(-2.,0.330008) (-1.9,0.356346) (-1.8,0.387163) (-1.7,0.423013) (-1.6,0.464484) (-1.5,0.512148) (-1.4,0.566482) (-1.3,0.627769) (-1.2,0.695993) (-1.1,0.770781) (-1.,0.851422) (-0.9,0.934009) (-0.8,1.01493) (-0.7,1.09359) (-0.6,1.16916) (-0.5,1.24046) (-0.4,1.30579) (-0.3,1.36271) (-0.2,1.40791) (-0.1,1.43746) (0.,1.4478) (0.1,1.43746) (0.2,1.40791) (0.3,1.36271) (0.4,1.30579) (0.5,1.24046) (0.6,1.16916) (0.7,1.09359) (0.8,1.01493) (0.9,0.934009) (1.,0.851422) (1.1,0.770781) (1.2,0.695993) (1.3,0.627769) (1.4,0.566482) (1.5,0.512148) (1.6,0.464484) (1.7,0.423013) (1.8,0.387163) (1.9,0.356346) (2.,0.330008)};
\addplot [color=BF-blue!55!BF-green,line width=0.7pt,smooth] coordinates {(-2.,0.410455) (-1.9,0.434394) (-1.8,0.465061) (-1.7,0.502906) (-1.6,0.548351) (-1.5,0.601686) (-1.4,0.662959) (-1.3,0.73187) (-1.2,0.807743) (-1.1,0.889601) (-1.,0.976312) (-0.9,1.06455) (-0.8,1.15152) (-0.7,1.23674) (-0.6,1.31952) (-0.5,1.3988) (-0.4,1.47293) (-0.3,1.5393) (-0.2,1.59381) (-0.1,1.63072) (0.,1.64396) (0.1,1.63072) (0.2,1.59381) (0.3,1.5393) (0.4,1.47293) (0.5,1.3988) (0.6,1.31952) (0.7,1.23674) (0.8,1.15152) (0.9,1.06455) (1.,0.976312) (1.1,0.889601) (1.2,0.807743) (1.3,0.73187) (1.4,0.662959) (1.5,0.601686) (1.6,0.548351) (1.7,0.502906) (1.8,0.465061) (1.9,0.434394) (2.,0.410455)};
\addplot [color=BF-red,line width=0.7pt,smooth] coordinates {(-2.,0.561712) (-1.9,0.568021) (-1.8,0.586922) (-1.7,0.618296) (-1.6,0.661819) (-1.5,0.716819) (-1.4,0.782186) (-1.3,0.85642) (-1.2,0.937803) (-1.1,1.02464) (-1.,1.11546) (-0.9,1.20747) (-0.8,1.29853) (-0.7,1.38826) (-0.6,1.47613) (-0.5,1.56124) (-0.4,1.64215) (-0.3,1.71632) (-0.2,1.77931) (-0.1,1.82369) (0.,1.84012) (0.1,1.82369) (0.2,1.77931) (0.3,1.71632) (0.4,1.64215) (0.5,1.56124) (0.6,1.47613) (0.7,1.38826) (0.8,1.29853) (0.9,1.20747) (1.,1.11546) (1.1,1.02464) (1.2,0.937803) (1.3,0.85642) (1.4,0.782186) (1.5,0.716819) (1.6,0.661819) (1.7,0.618296) (1.8,0.586922) (1.9,0.568021) (2.,0.561712)};
\addplot [color=BF-blue!60!BF-green,line width=0.7pt,smooth] coordinates {(-2.,0.795753) (-1.9,0.763956) (-1.8,0.753776) (-1.7,0.766142) (-1.6,0.79993) (-1.5,0.852274) (-1.4,0.919334) (-1.3,0.997199) (-1.2,1.08254) (-1.1,1.17287) (-1.,1.26645) (-0.9,1.36094) (-0.8,1.45473) (-0.7,1.54753) (-0.6,1.63891) (-0.5,1.72817) (-0.4,1.8141) (-0.3,1.89444) (-0.2,1.96476) (-0.1,2.01643) (0.,2.03628) (0.1,2.01643) (0.2,1.96476) (0.3,1.89444) (0.4,1.8141) (0.5,1.72817) (0.6,1.63891) (0.7,1.54753) (0.8,1.45473) (0.9,1.36094) (1.,1.26645) (1.1,1.17287) (1.2,1.08254) (1.3,0.997199) (1.4,0.919334) (1.5,0.852274) (1.6,0.79993) (1.7,0.766142) (1.8,0.753776) (1.9,0.763956) (2.,0.795753)};
\addplot [color=BF-blue!65!BF-green,line width=0.7pt,smooth] coordinates {(-2.,1.09319) (-1.9,1.02176) (-1.8,0.969983) (-1.7,0.947399) (-1.6,0.959687) (-1.5,1.00362) (-1.4,1.07009) (-1.3,1.15036) (-1.2,1.23858) (-1.1,1.33136) (-1.,1.4268) (-0.9,1.52294) (-0.8,1.61855) (-0.7,1.71342) (-0.6,1.80722) (-0.5,1.89941) (-0.4,1.98898) (-0.3,2.07404) (-0.2,2.15045) (-0.1,2.20897) (0.,2.23244) (0.1,2.20897) (0.2,2.15045) (0.3,2.07404) (0.4,1.98898) (0.5,1.89941) (0.6,1.80722) (0.7,1.71342) (0.8,1.61855) (0.9,1.52294) (1.,1.4268) (1.1,1.33136) (1.2,1.23858) (1.3,1.15036) (1.4,1.07009) (1.5,1.00362) (1.6,0.959687) (1.7,0.947399) (1.8,0.969983) (1.9,1.02176) (2.,1.09319)};
\addplot [color=BF-blue!70!BF-green,line width=0.7pt,smooth] coordinates {(-2.,1.39446) (-1.9,1.30468) (-1.8,1.22412) (-1.7,1.16347) (-1.6,1.14105) (-1.5,1.16786) (-1.4,1.23108) (-1.3,1.31283) (-1.2,1.40318) (-1.1,1.4977) (-1.,1.5944) (-0.9,1.69165) (-0.8,1.7885) (-0.7,1.88478) (-0.6,1.98026) (-0.5,2.07449) (-0.4,2.16671) (-0.3,2.2553) (-0.2,2.33664) (-0.1,2.40139) (0.,2.4286) (0.1,2.40139) (0.2,2.33664) (0.3,2.2553) (0.4,2.16671) (0.5,2.07449) (0.6,1.98026) (0.7,1.88478) (0.8,1.7885) (0.9,1.69165) (1.,1.5944) (1.1,1.4977) (1.2,1.40318) (1.3,1.31283) (1.4,1.23108) (1.5,1.16786) (1.6,1.14105) (1.7,1.16347) (1.8,1.22412) (1.9,1.30468) (2.,1.39446)};
\addplot [color=BF-blue!75!BF-green,line width=0.7pt,smooth] coordinates {(-2.,1.67424) (-1.9,1.57835) (-1.8,1.48616) (-1.7,1.40303) (-1.6,1.34503) (-1.5,1.34356) (-1.4,1.3998) (-1.3,1.48229) (-1.2,1.57424) (-1.1,1.67002) (-1.,1.76759) (-0.9,1.86559) (-0.8,1.96328) (-0.7,2.06055) (-0.6,2.1572) (-0.5,2.25289) (-0.4,2.347) (-0.3,2.43824) (-0.2,2.52347) (-0.1,2.59374) (0.,2.62476) (0.1,2.59374) (0.2,2.52347) (0.3,2.43824) (0.4,2.347) (0.5,2.25289) (0.6,2.1572) (0.7,2.06055) (0.8,1.96328) (0.9,1.86559) (1.,1.76759) (1.1,1.67002) (1.2,1.57424) (1.3,1.48229) (1.4,1.3998) (1.5,1.34356) (1.6,1.34503) (1.7,1.40303) (1.8,1.48616) (1.9,1.57835) (2.,1.67424)};
\addplot [color=BF-blue!80!BF-green,line width=0.7pt,smooth] coordinates {(-2.,1.93389) (-1.9,1.83583) (-1.8,1.73938) (-1.7,1.64687) (-1.6,1.56665) (-1.5,1.53089) (-1.4,1.57449) (-1.3,1.65702) (-1.2,1.7502) (-1.1,1.84691) (-1.,1.94509) (-0.9,2.04361) (-0.8,2.14189) (-0.7,2.23984) (-0.6,2.33732) (-0.5,2.43405) (-0.4,2.52955) (-0.3,2.62276) (-0.2,2.71104) (-0.1,2.78609) (0.,2.82092) (0.1,2.78609) (0.2,2.71104) (0.3,2.62276) (0.4,2.52955) (0.5,2.43405) (0.6,2.33732) (0.7,2.23984) (0.8,2.14189) (0.9,2.04361) (1.,1.94509) (1.1,1.84691) (1.2,1.7502) (1.3,1.65702) (1.4,1.57449) (1.5,1.53089) (1.6,1.56665) (1.7,1.64687) (1.8,1.73938) (1.9,1.83583) (2.,1.93389)};
\addplot [color=BF-blue!85!BF-green,line width=0.7pt,smooth] coordinates {(-2.,2.17869) (-1.9,2.07973) (-1.8,1.98156) (-1.7,1.88526) (-1.6,1.79471) (-1.5,1.73041) (-1.4,1.75395) (-1.3,1.83579) (-1.2,1.92992) (-1.1,2.02731) (-1.,2.12593) (-0.9,2.22481) (-0.8,2.32351) (-0.7,2.42196) (-0.6,2.52003) (-0.5,2.61752) (-0.4,2.71403) (-0.3,2.80872) (-0.2,2.89937) (-0.1,2.97849) (0.,3.01708) (0.1,2.97849) (0.2,2.89937) (0.3,2.80872) (0.4,2.71403) (0.5,2.61752) (0.6,2.52003) (0.7,2.42196) (0.8,2.32351) (0.9,2.22481) (1.,2.12593) (1.1,2.02731) (1.2,1.92992) (1.3,1.83579) (1.4,1.75395) (1.5,1.73041) (1.6,1.79471) (1.7,1.88526) (1.8,1.98156) (1.9,2.07973) (2.,2.17869)};
\addplot [color=BF-blue!90!BF-green,line width=0.7pt,smooth] coordinates {(-2.,2.41288) (-1.9,2.31349) (-1.8,2.21453) (-1.7,2.11656) (-1.6,2.02146) (-1.5,1.9399) (-1.4,1.93743) (-1.3,2.01766) (-1.2,2.11254) (-1.1,2.21045) (-1.,2.30938) (-0.9,2.40853) (-0.8,2.50753) (-0.7,2.60633) (-0.6,2.70484) (-0.5,2.80288) (-0.4,2.90015) (-0.3,2.99596) (-0.2,3.08845) (-0.1,3.17099) (0.,3.21323) (0.1,3.17099) (0.2,3.08845) (0.3,2.99596) (0.4,2.90015) (0.5,2.80288) (0.6,2.70484) (0.7,2.60633) (0.8,2.50753) (0.9,2.40853) (1.,2.30938) (1.1,2.21045) (1.2,2.11254) (1.3,2.01766) (1.4,1.93743) (1.5,1.9399) (1.6,2.02146) (1.7,2.11656) (1.8,2.21453) (1.9,2.31349) (2.,2.41288)};
\addplot [color=BF-blue!95!BF-green,line width=0.7pt,smooth] coordinates {(-2.,2.63941) (-1.9,2.53979) (-1.8,2.44043) (-1.7,2.34164) (-1.6,2.2444) (-1.5,2.15413) (-1.4,2.12464) (-1.3,2.20195) (-1.2,2.29744) (-1.1,2.39575) (-1.,2.49491) (-0.9,2.59425) (-0.8,2.69347) (-0.7,2.79253) (-0.6,2.89136) (-0.5,2.98982) (-0.4,3.08765) (-0.3,3.1843) (-0.2,3.27824) (-0.1,3.36363) (0.,3.40939) (0.1,3.36363) (0.2,3.27824) (0.3,3.1843) (0.4,3.08765) (0.5,2.98982) (0.6,2.89136) (0.7,2.79253) (0.8,2.69347) (0.9,2.59425) (1.,2.49491) (1.1,2.39575) (1.2,2.29744) (1.3,2.20195) (1.4,2.12464) (1.5,2.15413) (1.6,2.2444) (1.7,2.34164) (1.8,2.44043) (1.9,2.53979) (2.,2.63941)};
\addplot [color=BF-blue!100!BF-green,line width=0.7pt,smooth] coordinates {(-2.,2.86029) (-1.9,2.76055) (-1.8,2.66097) (-1.7,2.56173) (-1.6,2.46341) (-1.5,2.36887) (-1.4,2.31573) (-1.3,2.38816) (-1.2,2.48414) (-1.1,2.58275) (-1.,2.68209) (-0.9,2.78157) (-0.8,2.88095) (-0.7,2.98021) (-0.6,3.07928) (-0.5,3.17805) (-0.4,3.27631) (-0.3,3.37361) (-0.2,3.46867) (-0.1,3.55644) (0.,3.60555) (0.1,3.55644) (0.2,3.46867) (0.3,3.37361) (0.4,3.27631) (0.5,3.17805) (0.6,3.07928) (0.7,2.98021) (0.8,2.88095) (0.9,2.78157) (1.,2.68209) (1.1,2.58275) (1.2,2.48414) (1.3,2.38816) (1.4,2.31573) (1.5,2.36887) (1.6,2.46341) (1.7,2.56173) (1.8,2.66097) (1.9,2.76055) (2.,2.86029)};
\addplot [color=gray,line width=0.4pt,dashed] coordinates {(-2,-0.4) (-2,3)};
\addplot [color=gray,line width=0.4pt,dashed] coordinates {(2,-0.4) (2,3)};
\node[label={270:{\scriptsize{-2}}}] at (axis cs:-2.2,0.04) {};
\node[label={270:{\scriptsize{0}}}] at (axis cs:-0.15,0.04) {};
\node[label={270:{\scriptsize{2}}}] at (axis cs:2.15,0.04) {};
\end{axis}
\end{tikzpicture}
\caption{The figure shows the graphs of the solutions of the Cauchy problem $(u(0),u'(0))=(R,0)$
associated with the equation in \eqref{eq-main-phi} with $a(t) = 1$ on $\mathopen{]}-1,1\mathclose{[}$, $a(t) = -10$ on $\mathopen{[}-2,-1\mathclose{]} \cup \mathopen{[}1,2\mathclose{]}$ and $g(u) = e^{u^{2}}-1$
(on the left) and $g(u)=u^{2}$ (on the right). 
The solutions satisfying Neumann boundary conditions (one on the left, and two on the right) are painted in red. 
In both cases, large solutions become more and more similar
to affine functions with slope $\pm 1$ (cf.~\cite[Section~4]{BoFe-PP} where this behavior is indeed proved for the parameter-dependent equation \eqref{eq-g} when $\lambda \to +\infty$). However, as proved in the verification of condition $(H_{2})$, for the super-exponential nonlinearity $g(u) = e^{u^{2}}-1$, large solutions are decreasing (with slope close to $-1$) on the right of the maximum point $t = 0$ and increasing (with slope close to $1$) 
on the left of the maximum point: therefore, their graphs are $\wedge$-shaped. On the contrary, for the power nonlinearity $g(u)=u^{2}$, the behavior of large solutions is the same on the positivity interval $\mathopen{[}-1,1\mathclose{]}$ (notice, indeed, that the proof of \eqref{eq-hat-t-2} only requires that $g(u) \to +\infty$ for $u \to +\infty$), but
further changes of monotonicity, with slopes suddenly passing from $-1$ to $1$ and viceversa, can arise in the negativity intervals.} 
\label{fig-03}
\end{figure}
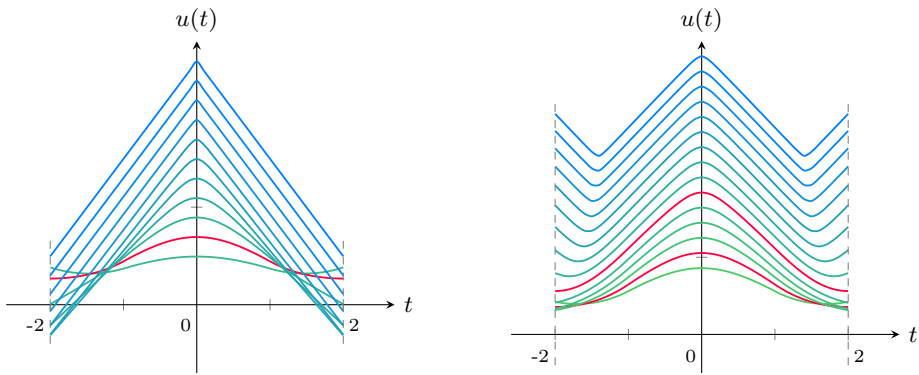

\begin{remark}\label{rem-3.2}
It is worth noticing that, arguing as in \cite[Lemma~3.4]{BoFe-PP}, it is possible to verify condition $(H_{1})$
also when condition $(g_{0})$ is replaced by the assumption that $g(u)$ is continuously differentiable in a right neighborhood of $u=0$ and $g'(0) = 0$.
Moreover, looking more deeply into the whole proof, one can observe that it is not necessary that $a(t)$ is essentially bounded in the whole interval $\mathopen{[}0,T\mathclose{]}$: it is sufficient that $a\in L^{1}(0,T)\cap L^{\infty}(\bigcup_{i=1}^{m} I^{-}_{i})$. Furthermore, concerning the monotonicity condition $(g_{\nearrow})$, we notice that it is assumed only for simplicity in the exposition, indeed Theorem~\ref{th-main-ex} holds true also when $(g_{\nearrow})$ is replaced by the following weaker assumption:
\begin{itemize}[leftmargin=34pt,labelsep=12pt,itemsep=6pt,topsep=5pt]
\item there exist $M\in\mathopen{]}0,1\mathclose{]}$ and $\check{R}>0$ such that $g(t)\geq M g(s)$ for all $s\geq \check{R}$ and $t\in\mathopen{[}s,s+\max_{i} \delta_{i}\mathclose{]}$.
\end{itemize}
If the above hypothesis is assumed instead of $(g_{\nearrow})$, minimal modifications in the verification of $(H_{2})$ lead to the same conclusion.
\hfill$\lhd$
\end{remark}

\bibliographystyle{elsart-num-sort}
\bibliography{BoFeZa-biblio}

\end{document}